\documentclass{amsart}
\usepackage{amssymb, amsthm, amsmath}
\setlength{\textheight}{43pc} 
\setlength{\textwidth}{28pc}
\theoremstyle{plain}
\newtheorem{thm}{Theorem}[section]
\newtheorem{prop}[thm]{Proposition}
\newtheorem{lem}[thm]{Lemma}
\newtheorem{cor}[thm]{Corollary}

\theoremstyle{definition}

\newtheorem{dfn}[thm]{Definition}
\newtheorem{ex}[thm]{Example}

\newtheorem*{conj}{Conjecture}

\theoremstyle{remark}
\newtheorem{rem}[thm]{Remark}

\newtheorem*{ack}{Acknowledgment}
\newcommand\m{\mathfrak{m}}
\newcommand\Hess{\operatorname{Hess}}
\newcommand\Hilb{\operatorname{Hilb}}
\newcommand\Sym{\operatorname{Sym}}
\newcommand\Ann{\operatorname{Ann}}
\newcommand\ini{\operatorname{in}}
\newcommand\LCM{\operatorname{L.C.M}}
\newcommand\F{\mathcal{F}}

\newcommand\ZZ{\mathbb{Z}}
\newcommand\FF{\mathbb{F}}
\newcommand\PP{\mathbb{P}}
\newcommand\RR{\mathbb{R}}
\newcommand\B{\mathcal{B}}

\begin{document}
\title[Gorenstein algebras associated to matroids]{Sperner property and finite-dimensional Gorenstein 
algebras associated to matroids}
\author[T. Maeno]{Toshiaki Maeno}
\address{Toshiaki Maeno, Department of Electrical Engineering, 
Kyoto University, 
Kyoto 606-8501, Japan}
\email{maeno@kuee.kyoto-u.ac.jp}
\thanks{The first author is supported by Grant-in-Aid for Scientific Research.}
\author[Y. Numata]{Yasuhide Numata}
\address{Yasuhide Numata, Department of Mathematical Informatics, 
The University of Tokyo, 
Hongo 7-3-1, Bunkyo-ku, Tokyo, 113-8656, Japan}
\address{Japan Science and Technology Agency (JST) CREST}
\email{numata@stat.t.u-tokyo.ac.jp}
\thanks{The second author is supported by JST CREST}
\date{}
\subjclass[2000]{Primary 13E10. Secondary 13H10, 06A11, 05B35.}
\begin{abstract}
We prove the Lefschetz property for a certain class of finite-dimensional 
Gorenstein algebras associated to matroids. Our result implies 
the Sperner property of the vector space lattice. More generally, it is shown that 
the modular geometric lattice has the Sperner property. 
We also discuss the Gr\"obner fan of the defining ideal of our 
Gorenstein algebra. 
\end{abstract}
\maketitle
\section*{Introduction}
The Lefschetz property for Artinian Gorenstein rings is a ring-theoretic abstraction 
of the Hard Lefschetz Theorem for compact K\"ahler manifolds. Stanley developed 
the ideas of applications of the Lefschetz property to combinatorial problems. 
For example, he showed in \cite{S1} the Sperner property of the Bruhat ordering on the 
Weyl groups based on the Hard Lefschetz Theorem for the flag varieties. 
One of the main topics of the present paper is an application of 
the Lefschetz property for a certain kind of finite-dimensional Gorenstein 
algebras to the {\em Sperner property} of the vector space lattice $V(q,n)$ consisting of 
the linear subspaces of the vector space $\FF_q^n.$ 
A finite ranked poset $P=\bigcup_{i\geq 0} P_i$ with the level sets $P_i$ 
is said to have the Sperner property if the maximal cardinality of antichains of $P$ 
is equal to $\max_i (\# P_i).$ 

For a given ranked poset $P=\bigcup_i P_i,$ let $V_i$ be the vector space spanned 
by the elements of $P_i.$ 
The Sperner property for $P$ can be shown by constructing 
a sequence $(f_0,f_1,f_2,\ldots )$ of linear maps $f_i:V_i \rightarrow V_{i+1}$ 
satisfying a certain condition. Let $A^{(i)}=(a_{uv}^{(i)})_{u\in P_i,v\in P_{i+1}}$ 
be the matrix representing $f_i,$ i.e., 
\[ f_i(u)=\sum_{v\in P_{i+1}}a_{uv}^{(i)}v , \;\;\; u\in P_i. \] 
If every matrix $A^{(i)}$ satisfies the condition 
$a_{uv}^{(i)}\not=0 \Rightarrow u<v,$ 
and is of full rank, then $P$ has the Sperner property. 
(See e.g. \cite{HMMNWW} for details.) 
 
The Sperner property of the vector space lattice $V(q,n)$ can be deduced from 
the result on the rank of its incidence matrices due to Kantor \cite{Ka}. 
We will give another proof of the Sperner property of $V(q,n)$ by the 
construction of a finite-dimensional Gorenstein algebra $A_{M(q,n)}$ associated 
to the matroid $M(q,n)$ on the finite projective space $\PP^{n-1}(\FF_q)$ and by showing that 
$A_{M(q,n)}$ has the Lefschetz property. 

Our construction can be done for general matroids. 
For a matroid $M$ and 
its bases $\B,$ we introduce a polynomial $\Phi_M := \sum_{B\in \B}x_B.$ 
The Gorenstein algebra $A_M$ will be defined to be the quotient algebra 
of the ring of the differential polynomials by the annihilator $\Ann \Phi_M$ of 
$\Phi_M.$ We will generalize the results for the matroid $M(q,n)$ 
to the case of matroids corresponding to modular geometric lattices. 

For a general polynomial $F$, though $F$ has all the informations 
on the annihilator $\Ann F$ in principle, the combinatorial structure of 
$\Ann F$ is quite delicate in general, so it is difficult 
to describe directly from $F.$ 
It is remarkable that in our case 
the Gr\"obner fan $G(\Ann \Phi_{M(q,n)})$ of the annihilator of $\Phi_{M(q,n)}$ 
is a refinement of that of the principal ideal generated by $\Phi_{M(q,n)},$ 
which is also a consequence of our main theorem. 
As discussed in \cite{BJSST}, the Gr\"obner fan of an ideal is often 
difficult to compute. 
We will see that $G(\Ann \Phi_{M(q,n)})$ can be recovered from 
the tropical hypersurfaces of certain polynomials defined by the bases of 
the linear subspaces of $\PP^{n-1}(\FF_q).$ 

\begin{ack}
The authors thank Junzo Watanabe for suggesting the idea of the proof 
of the Sperner property for the vector space lattice via the Lefschetz 
property. They are grateful to Satoshi Murai for his helpful comment 
on modular geometric lattices. 
\end{ack}

\section{Finite-dimensional Gorenstein algebras and Lefschetz property} 
In this section we summarize some fundamental results on the structure 
of finite-dimensional Gorenstein algebras and on the Lefschetz property, 
which will be used in the subsequent sections. 
\begin{dfn}
Let $A=\oplus_{d=0}^DA_d,$ $A_D\not=0,$ be a graded Artinian algebra. 
We say that $A$ has {\em the strong Lefschetz property (in the narrow sense)} if there exists an element 
$L \in A_1$ such that the multiplication map 
\[ \times L^{D-2i}:A_i \rightarrow A_{D-i} \] 
is bijective for $i=0,\ldots,[D/2].$ 
\end{dfn}

In the rest of this paper, we consider the Gorenstein algebras that is 
finite-dimensional over a field $k$ 
of characteristic zero. 
\begin{dfn} (See \cite[Chapter 5, 6.5]{Sm}.) 
A finite-dimensional graded $k$-algebra $A=\oplus_{d=0}^DA_d$ is called the {\em Poincar\'e 
duality algebra} if $\dim_k A_D=1$ and the bilinear pairing 
\[ A_d \times A_{D-d} \rightarrow A_D \cong k \] 
is non-degenerate for $d=0,\ldots,[D/2].$ 
\end{dfn}
The following is a well-known fact (see e.g. \cite{GHMS}, \cite{HMMNWW}, \cite{MW}).
\begin{prop} \label{Poincare}
A graded Artinian $k$-algebra $A$ is a Poincar\'e duality algebra if and only if 
$A$ is Gorenstein. 
\end{prop}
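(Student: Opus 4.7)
The plan is to prove both implications by analyzing the socle $\operatorname{soc}(A) := (0 :_A \m)$, where $\m = \bigoplus_{d\geq 1} A_d$. I will use three standard facts: $A$ is Gorenstein if and only if $\dim_k \operatorname{soc}(A) = 1$; the socle is a graded ideal (since $\m$ is graded); and $A_D \subseteq \operatorname{soc}(A)$ automatically, because $\m \cdot A_D \subseteq A_{>D} = 0$.

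For the direction Poincar\'e duality $\Rightarrow$ Gorenstein, I would show $\operatorname{soc}(A) = A_D$. Graded-ness of the socle reduces this to verifying that its degree-$d$ component vanishes for each $d < D$. If $a \in A_d \cap \operatorname{soc}(A)$ with $d < D$, then $A_{D-d} \cdot a \subseteq \m \cdot a = 0$, and non-degeneracy of the pairing $A_d \times A_{D-d} \to A_D$ forces $a = 0$. Combined with $\dim_k A_D = 1$, this yields $\dim_k \operatorname{soc}(A) = 1$, so $A$ is Gorenstein.

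For the converse, $\dim_k A_D = 1$ follows from $0 \neq A_D \subseteq \operatorname{soc}(A)$ together with $\dim_k \operatorname{soc}(A) = 1$. The substantive step is non-degeneracy of $A_d \times A_{D-d} \to A_D$: for any $0 \neq a \in A_d$ I must produce $b \in A_{D-d}$ with $ab \neq 0$. Apply the classical socle-trapping argument to the non-zero graded ideal $aA$: pick the largest $N$ with $\m^N \cdot (aA) \neq 0$; then any non-zero element there is annihilated by $\m$ and therefore lies in $aA \cap \operatorname{soc}(A)$. Since $\operatorname{soc}(A) = A_D$ is one-dimensional and concentrated in degree $D$, this produces a non-zero homogeneous element of the form $ab' \in A_D$. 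Decomposing $b' = \sum_e b_e$ into graded pieces, the constraint $ab' \in A_D$ forces $ab_e = 0$ whenever $d+e \neq D$, so $b := b_{D-d}$ satisfies $ab \neq 0$. The symmetric argument (swapping $d$ and $D-d$) yields non-degeneracy from the other side.

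The step I expect to require the most care is the non-degeneracy argument in the converse: one must ensure that the socle element extracted from $aA$ can be factored as $a$ times a homogeneous element of the precise degree $D-d$, and this is exactly what the graded refinement of the socle-trapping argument supplies.
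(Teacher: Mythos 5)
Your proof is correct. The paper does not actually prove this proposition---it records it as well-known and cites \cite{GHMS}, \cite{HMMNWW}, \cite{MW}---and your argument is exactly the standard one found in those sources: use non-degeneracy of the pairing to show $\operatorname{soc}(A)=A_D$ (hence one-dimensional) in one direction, and in the other direction run the graded socle-trapping argument on the cyclic ideal $aA$ to produce, for each non-zero $a\in A_d$, a homogeneous $b\in A_{D-d}$ with $ab\neq 0$. The only point worth making explicit is the standing convention $A_0=k$ (true for every algebra in the paper, and forced on the Poincar\'e-duality side by non-degeneracy of $A_0\times A_D\to A_D$), since the criterion ``Gorenstein $\Leftrightarrow$ $\dim_k\operatorname{soc}(A)=1$'' requires $A$ to be graded local; without it, e.g.\ $A=k\times k$ concentrated in degree $0$ is Gorenstein but not a Poincar\'e duality algebra.
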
 

\begin{cor} \label{prodGor}
The tensor product of two graded Artinian Gorenstein $k$-algebras is again Gorenstein. 
\end{cor}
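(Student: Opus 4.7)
The plan is to leverage Proposition \ref{Poincare} to restate the problem entirely in terms of Poincar\'e duality algebras. That is, given graded Artinian Gorenstein $k$-algebras $A=\oplus_{d=0}^{D_A}A_d$ and $B=\oplus_{d=0}^{D_B}B_d$, it suffices to show that $A\otimes_k B$, endowed with the natural grading $(A\otimes B)_d=\oplus_{i+j=d}A_i\otimes B_j$, is a Poincar\'e duality algebra, whence Gorenstein again by the same proposition.

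First I would identify the top degree. Setting $D=D_A+D_B$, the summand $(A\otimes B)_D$ reduces to $A_{D_A}\otimes B_{D_B}$ because every other contribution $A_i\otimes B_{D-i}$ with $i\neq D_A$ has one factor above the respective top degree, hence is zero. Since $\dim_k A_{D_A}=\dim_k B_{D_B}=1$, we get $\dim_k (A\otimes B)_D=1$, giving a canonical isomorphism $(A\otimes B)_D\cong k\otimes k\cong k$ that is compatible with the individual Poincar\'e pairings.

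The heart of the argument is the non-degeneracy of the pairing $(A\otimes B)_d\times(A\otimes B)_{D-d}\to(A\otimes B)_D$. Write an arbitrary element $\alpha\in(A\otimes B)_d$ as $\alpha=\sum_{i}\alpha_i$ with $\alpha_i\in A_i\otimes B_{d-i}$. Choose, for each admissible $i$, a basis $\{e^{(i)}_k\}$ of $A_i$ together with its Poincar\'e dual basis $\{\tilde e^{(i)}_k\}\subset A_{D_A-i}$, and similarly $\{f^{(i)}_l\}\subset B_{d-i}$ with dual $\{\tilde f^{(i)}_l\}\subset B_{D_B-d+i}$. Expanding $\alpha_i=\sum_{k,l}c^{(i)}_{kl}\,e^{(i)}_k\otimes f^{(i)}_l$ and testing against $\beta=\tilde e^{(i)}_{k_0}\otimes\tilde f^{(i)}_{l_0}\in(A\otimes B)_{D-d}$, I would check via a short degree comparison that the cross terms $\alpha_{i'}\cdot\beta$ with $i'\neq i$ vanish (one factor lands strictly above the top degree of $A$ or of $B$), so that the product reduces to $c^{(i)}_{k_0l_0}$ times a generator of $(A\otimes B)_D$. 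If $\alpha$ is orthogonal to every such $\beta$, then all coefficients $c^{(i)}_{k_0l_0}$ vanish, forcing $\alpha=0$.

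The main obstacle, such as it is, is purely bookkeeping: verifying that in the product $\alpha_{i'}\cdot\beta$ the homogeneous bidegree exceeds $(D_A,D_B)$ on one of the two sides whenever $i'\neq i$, so that only the diagonal term contributes. Once this vanishing is in place, the dual-basis test closes the argument and Proposition \ref{Poincare} delivers the corollary.
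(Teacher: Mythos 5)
Your proposal is correct and follows exactly the route the paper intends: the corollary is stated as an immediate consequence of Proposition \ref{Poincare}, and your verification that $A\otimes_k B$ is a Poincar\'e duality algebra (one-dimensional top degree, non-degenerate pairing via dual bases, with the cross terms killed by degree reasons) is the standard argument the authors leave implicit. No gaps; the only unstated point is that triviality of one kernel of the pairing gives the other by finite-dimensionality, which is routine.
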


Let $P=k[x_1,\ldots,x_n]$ and $Q=k[X_1,\ldots,X_n]$ be polynomial rings over $k.$ 
We may regard $P$ as a $Q$-module via the identification 
$X_i=\partial /\partial x_i,$ $i=1,\ldots,n.$ 
For a polynomial $F(x)\in P,$ denote by $\Ann F$ the ideal of $Q$ generated 
by the differential polynomials annihilating $F,$ i.e., 
\[ \Ann F:= \{ \varphi(X) \in Q\; | \; \varphi(X)F(x)=0 \} . \] 
The following is immediate from the theory of the inverse systems 
(see \cite{BH}, \cite{Ge}, \cite{GW}). 
\begin{prop} \label{Gor} 
Let $I$ be an ideal of $Q=k[X_1,\ldots,X_n]$ and 
$A=Q/I$ the quotient algebra. Denote by $\m$ the maximal ideal 
$(X_1,\ldots,X_n)$ of $Q.$ 
Then $\sqrt{I}=\m$ and the $k$-algebra $A$ is Gorenstein if and only if 
there exists a polynomial $F \in R=k[x_1,\ldots,x_n]$ 
such that $I=\Ann_Q F.$ 
\end{prop}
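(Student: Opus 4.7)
My plan is to derive the statement from Macaulay's inverse system duality, which is the content of the cited references \cite{BH}, \cite{Ge}, \cite{GW}. First I would introduce the pairing $\langle \varphi, f\rangle := (\varphi \cdot f)(0)$ between $Q$ and $R$, and record the standard fact that it sets up an inclusion-reversing bijection
\[
\{\text{ideals } I \subseteq Q \text{ of finite colength}\} \longleftrightarrow \{\text{finite-dimensional } Q\text{-submodules } M \subseteq R\},
\]
sending $I \mapsto I^\perp := \{f \in R \mid I \cdot f = 0\}$ and $M \mapsto \Ann_Q M$, under which there is a natural $Q$-linear isomorphism $I^\perp \cong \operatorname{Hom}_k(Q/I, k)$. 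Having this duality in hand, the two implications each reduce to a single identification.

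For the direction ($\Leftarrow$), assume $I = \Ann_Q F$. The assignment $\varphi + I \mapsto \varphi \cdot F$ gives a $Q$-linear isomorphism of $A = Q/I$ onto the submodule $Q \cdot F \subseteq R$. Since all derivatives of $F$ have degree at most $\deg F$, this submodule is finite-dimensional over $k$, so $A$ is Artinian and $\sqrt{I} = \m$. To establish the Gorenstein property I would then verify that the socle of $A$ is one-dimensional: the class of $\varphi$ lies in the socle precisely when $\varphi \cdot F$ is annihilated by every $X_i$, i.e.\ $\varphi \cdot F \in k$; any two such classes are visibly proportional modulo $I$, while a nonzero example is furnished by $X^\alpha$ where $x^\alpha$ is a highest-degree monomial of $F$ (so that $X^\alpha \cdot F$ is a nonzero scalar).

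For the direction ($\Rightarrow$), assume $\sqrt{I} = \m$ and $A = Q/I$ is Gorenstein, and set $M := I^\perp \subseteq R$. By the duality, $M$ is finite-dimensional with $\Ann_Q M = I$ and $M \cong \operatorname{Hom}_k(A, k)$ as $Q$-modules. The Gorenstein hypothesis for the Artinian local ring $A$ is exactly the statement that the injective hull of its residue field equals $A$ itself, so $\operatorname{Hom}_k(A, k) \cong A$ as $A$-modules. Hence $M$ is cyclic as a $Q$-module, say $M = Q \cdot F$ for some $F \in R$, and $I = \Ann_Q M = \Ann_Q F$.

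The genuine obstacle in carrying this out from scratch is not either implication, but the underlying Macaulay duality: one must verify perfection of the pairing on compatible filtration pieces, check that $I^\perp$ really is a $Q$-submodule of finite dimension whose annihilator is $I$, and produce the $Q$-linear identification with $\operatorname{Hom}_k(Q/I, k)$. Since all of this is classical, I would follow the authors in simply invoking the inverse systems theory rather than reproving it.
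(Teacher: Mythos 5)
Your argument is correct and matches the paper's approach: the paper gives no proof beyond the remark that the statement ``is immediate from the theory of the inverse systems'' with citations to \cite{BH}, \cite{Ge}, \cite{GW}, and that theory is exactly the Macaulay duality you invoke. Your explicit reductions --- the one-dimensional socle of $Q/\Ann_Q F$ via a top-degree monomial of $F$ for ($\Leftarrow$), and the cyclicity of $I^\perp\cong\operatorname{Hom}_k(A,k)\cong A$ for ($\Rightarrow$) --- are the standard way to unpack that citation and contain no gaps.
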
 

\begin{ex}
The coinvariant algebra $R_W$ of the finite Coxeter group $W$ is an example 
of the finite-dimensional Gorenstein algebra with the strong Lefschetz 
property. 
The coinvariant algebra $R_W$ is defined to be a quotient of the ring of 
polynomial functions on the reflection representation $V$ of $W$ by the ideal 
generated by the fundamental $W$-invariants. 
When $W$ is crystallographic (i.e., Weyl group), the Lefschetz property 
of $R_W$ is a consequence of the Hard Lefschetz Theorem for the corresponding 
flag variety $G/B.$ Stanley \cite{S1} has shown the Sperner property of the strong 
Bruhat ordering on $W$ from the Lefschetz property of $R_W$ (except for type $H_4$). 
The Lefschetz property of $R_W$ of type $H_4$ has been confirmed in \cite{NW}. 
Since $R_W$ is Gorenstein, it has a presentation as in Proposition \ref{Gor}. 
In fact, $R_W$ is isomorphic to the algebra $\Sym V^*/ \Ann F,$ where 
$F$ is the product of the positive roots. 
\end{ex}

\begin{dfn}
Let $G$ be a polynomial in $k[x_1,\ldots,x_n].$
When a family ${\bf B}_d=\{ \alpha^{(d)}_i \}_i$ of homogeneous polynomials of degree 
$d>0$ is given, 
we call the polynomial 
\[ \det \Big( (\alpha^{(d)}_i(X) \alpha^{(d)}_j(X) G(x))_{i,j=1}^{\# {\bf B}_d} \Big) 
\in k[x_1,\ldots,x_n] \] 
the {\em $d$-th Hessian} of $G$ with respect to ${\bf B}_d,$ 
and denote it by $\Hess_{{\bf B}_d}^{(d)}G.$ 
We denote the $d$-th Hessian simply by $\Hess^{(d)}G$ if the choice of 
${\bf B}_d$ is clear. 
\end{dfn}
When $d=1$ and $\alpha^{(1)}_j(X)=X_j,$ $j=1,\ldots ,n,$ the first Hessian 
$\Hess^{(1)}G$ coincides with the usual Hessian: 
\[ \Hess^{(1)}G= \Hess\; G:=
\det \left( \frac{\partial^2 G}{\partial x_i \partial x_j}  
\right)_{ij} . \] 

Let a finite-dimensional graded Gorenstein algebra $A=\oplus_d A_d$ have 
the presentation $A=Q/\Ann_Q F.$ 
The following gives a criterion for an element $L\in A_1$ to be a Lefschetz 
element. 
\begin{prop} {\rm (\cite[Theorem 4]{W2})}
Fix an arbitrary $k$-linear basis ${\bf B}_d$ of $A_d$ for $d=1,\ldots,[D/2].$ 
An element $L=a_1X_1+\cdots +a_nX_n \in A_1$ is a strong Lefschetz element 
of $A=Q/\Ann_Q F$ 
if and only if $F(a_1,\ldots,a_n) \not= 0$ and 
\[ (\Hess_{{\bf B}_d}^{(d)}F)(a_1,\ldots,a_n) \not= 0 \]
for $d=1,\ldots,[D/2].$ 
\end{prop}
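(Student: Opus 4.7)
The plan is to reduce the Lefschetz condition at each degree $d$ to the non-degeneracy of a symmetric bilinear form on $A_d$, and then identify the matrix of that form (in the basis ${\bf B}_d$) with the Hessian matrix of $F$ evaluated at $a=(a_1,\ldots,a_n)$.

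First I would fix the identification $A_D\cong k$ given by $\varphi\mapsto (\varphi(X)F(x))|_{x=0}$ for $\varphi\in A_D$; this is well-defined because $F$ is homogeneous of degree $D$ so $\varphi(X)F(x)$ is a constant, and it identifies the Poincar\'e pairing of Proposition~\ref{Poincare} with $(\alpha,\beta)\mapsto (\alpha(X)\beta(X)F(x))|_{x=0}$. Under this identification, for each $d\in\{1,\ldots,[D/2]\}$, the multiplication map $\times L^{D-2d}:A_d\to A_{D-d}$ is bijective if and only if the symmetric bilinear form
\[
B_d:A_d\times A_d\to k,\qquad (\alpha,\beta)\mapsto \bigl(\alpha(X)\beta(X)L(X)^{D-2d}F(x)\bigr)\big|_{x=0}
\]
is non-degenerate; this is just the standard equivalence between injectivity of a map into a space carrying a perfect pairing with the source and non-degeneracy of the pulled-back form.

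Next I would compute the Gram matrix of $B_d$ in the basis ${\bf B}_d=\{\alpha_i^{(d)}\}$. The essential observation is that $G_{ij}(x):=\alpha_i^{(d)}(X)\alpha_j^{(d)}(X)F(x)$ is a homogeneous polynomial in $x$ of degree $D-2d$, hence by Euler-type expansion
\[
G_{ij}(a)=\frac{1}{(D-2d)!}\,(a\cdot\nabla)^{D-2d}G_{ij}(x)\big|_{x=0}=\frac{1}{(D-2d)!}\,L(X)^{D-2d}G_{ij}(x)\big|_{x=0},
\]
so the $(i,j)$ entry of the Gram matrix of $B_d$ equals $(D-2d)!\cdot G_{ij}(a)$. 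Taking determinants gives
\[
\det(B_d)=\bigl((D-2d)!\bigr)^{\#{\bf B}_d}\cdot(\Hess^{(d)}_{{\bf B}_d}F)(a),
\]
so $B_d$ is non-degenerate precisely when $(\Hess^{(d)}_{{\bf B}_d}F)(a)\neq 0$.

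Finally the $d=0$ case is handled separately: $\times L^D:A_0\to A_D$ is bijective iff $L(X)^D F(x)\neq 0$ in $A_D$, and evaluating at $x=0$ gives $L(X)^D F(x)|_{x=0}=D!\,F(a)$, so the condition is $F(a)\neq 0$. Assembling these equivalences across all $d\in\{0,1,\ldots,[D/2]\}$ yields the stated criterion. The main technical point — and the only one requiring care — is the Euler-type identity relating $G_{ij}(a)$ to $L^{D-2d}G_{ij}$ at $x=0$, since everything else is essentially formal manipulation of Poincar\'e duality; the proposition should be insensitive to the choice of ${\bf B}_d$ because a change of basis alters the Gram determinant by the square of a nonzero scalar.
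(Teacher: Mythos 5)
Your argument is correct. Note that the paper does not prove this proposition at all --- it is quoted from \cite[Theorem 4]{W2} --- so there is nothing internal to compare against; but your proof is essentially the standard one from that reference: reduce bijectivity of $\times L^{D-2d}$ to non-degeneracy of the form $(\alpha,\beta)\mapsto(\alpha(X)\beta(X)L(X)^{D-2d}F(x))|_{x=0}$ via Poincar\'e duality, and identify its Gram matrix in the basis ${\bf B}_d$ with $(D-2d)!\cdot(\alpha_i(X)\alpha_j(X)F)(a)$ using the Euler-type identity $(a\cdot\nabla)^m G=m!\,G(a)$ for $G$ homogeneous of degree $m$ (valid since $\operatorname{char}k=0$). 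The two points worth making explicit, which you handle correctly, are that the form is well defined on $A_d$ (representatives differing by $\Ann F$ give the same value) and that the hypothesis that ${\bf B}_d$ is a basis of $A_d$, not merely a spanning or independent set of $Q_d$, is what makes non-vanishing of the Gram determinant equivalent to non-degeneracy.
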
  

\begin{cor} 
If one of the Hessians $\Hess_{{\bf B}_d}^{(d)}F,$ $d=1,\ldots,[D/2],$ is 
identically zero, then $A=Q/\Ann_Q F$ does not have the strong Lefschetz 
property. 
\end{cor}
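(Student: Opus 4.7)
The plan is to argue by contraposition, directly using the criterion stated in the preceding proposition. Suppose for contradiction that $A = Q/\Ann_Q F$ has the strong Lefschetz property. Then, by definition, there exists some element $L \in A_1$ such that the multiplication maps $\times L^{D-2i}: A_i \to A_{D-i}$ are bijective for $i = 0, \ldots, [D/2]$. Since $A_1$ is spanned by the images of $X_1, \ldots, X_n$, we may write $L = a_1 X_1 + \cdots + a_n X_n$ for some scalars $a_1, \ldots, a_n \in k$.

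Applying the ``only if'' direction of the preceding proposition to this $L$, the criterion forces
\[ (\Hess_{{\bf B}_d}^{(d)} F)(a_1, \ldots, a_n) \neq 0 \]
for every $d = 1, \ldots, [D/2]$. However, if one of the Hessians $\Hess_{{\bf B}_d}^{(d)} F$ is identically zero as a polynomial in $k[x_1, \ldots, x_n]$, then its evaluation at \emph{any} point of $k^n$, including $(a_1, \ldots, a_n)$, is zero. This contradicts the nonvanishing condition, so no such Lefschetz element $L$ can exist, and $A$ fails to have the strong Lefschetz property.

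There is essentially no obstacle here: the corollary is just the contrapositive of one half of the preceding proposition, combined with the observation that an identically zero polynomial evaluates to zero on every point. The only minor subtlety worth mentioning explicitly is that the choice of basis ${\bf B}_d$ does not affect the conclusion, since the vanishing of one particular Hessian (for some fixed basis) already rules out the existence of a Lefschetz element via the given criterion.
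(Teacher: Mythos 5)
Your argument is correct and is exactly the intended one: the paper states this corollary without proof as an immediate consequence of Watanabe's criterion, and your contrapositive reading (an identically zero Hessian vanishes at every candidate $(a_1,\ldots,a_n)$, so no $L\in A_1$ can satisfy the criterion) is precisely that immediate consequence.
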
 

\section{Matroids} 

\begin{dfn} A pair $(E,\F)$ of a finite set $E$ and $\F \subset 2^E$ is called a 
{\em matroid} if it satisfies the following axioms $(M1),(M2),(M3).$ \\ 
$(M1)$ $\emptyset \in \F.$ \\ 
$(M2)$ If $X\in \F$ and $Y\subset X,$ then $Y\in \F.$ \\ 
$(M3)$ If $X,Y\in \F$ and $\# X > \# Y,$ then there exists an element 
$x\in X\setminus Y$ such that $Y\cup \{ x \} \in \F.$ \\ 
Here, $\F$ is called the {\em system of independent sets.} 
\end{dfn} 

\begin{dfn} Let $M=(E,\F)$ be a matroid. \\ 
(1) A maximal element $B\in \F$ is called a {\em basis} of $M.$ 
We denote by $\B=\B(M)\subset \F$ the set of bases of $M.$ \\ 
(2) For a subset $S\subset E,$ define $r(S):=\max \{ \# F \; | \; F\in \F , F\subset S \}.$ 
The map $r:2^E \rightarrow \ZZ$ is called the {\em rank function} of $M.$ \\ 
(3) For a subset $S\subset E,$ define the {\em closure} $\sigma(S)$ of $S$ by 
\[ \sigma (S) := \{ y\in E \; | \; r(S\cup \{ y \}) = r(S) \} . \]  
We define an equivalence relation $\sim$ on $2^E$ by 
\[ S\sim T \; \Leftrightarrow \; \sigma(S) = \sigma (T) . \] 
A subset $S$ of $E$ is called a {\em flat} of $M$ if $S=\sigma(S).$ 
\end{dfn} 

\begin{ex} \label{vec-sp}
The projective space $\PP:=\PP^{n-1}(\FF_q)$ over a finite field $\FF_q$ has the structure of a matroid 
by the usual linear independence. More precisely, if we define 
the system of independence set $\F$ by 
\[ \F:= \{ F \in 2^{\PP}\; | \; \textrm{$F$ is linearly independent over $\FF_q$} \} , \] 
then $(\PP,\F)$ is a matroid. We denote it by $M(q,n).$ 
In this case, the closure $\sigma (S)$ of a subset $S\in \PP$ coincides with 
the linear subspace $\langle S \rangle$ of $\PP$ spanned by $S.$ 
\end{ex} 

\begin{lem} \label{equiv}
Let $S,T\in \F.$ Then we have 
\[ S\sim T \Leftrightarrow \{U \in \F \; | \; U\cap S=\emptyset,U \cup S \in \F \} = 
\{ U \in \F \; | \; U\cap T=\emptyset,U\cup T \in \F \} . \] 
\end{lem}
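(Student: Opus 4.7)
Denote by $\mathcal D(S)$ the right-hand side $\{U\in\F\mid U\cap S=\emptyset,\ U\cup S\in\F\}$. Throughout I use the standard facts that for $X\in\F$ one has $r(X)=|X|$; that the rank function is submodular and the closure operator $\sigma$ is consequently monotone; and that $r(\sigma(X))=r(X)$, equivalently, $T\subseteq\sigma(X)$ implies $r(X\cup T)=r(X)$.

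For the implication $\mathcal D(S)=\mathcal D(T)\Rightarrow S\sim T$, I proceed by contrapositive. Assume $\sigma(S)\neq\sigma(T)$ and, without loss of generality, pick $y\in\sigma(S)\setminus\sigma(T)$. Then $y\notin T$ (since $T\subseteq\sigma(T)$), $\{y\}\in\F$ (because loops lie in every closure and $y\notin\sigma(T)$), and $T\cup\{y\}\in\F$, so $\{y\}\in\mathcal D(T)$. To see $\{y\}\notin\mathcal D(S)$, split on whether $y\in S$: if $y\in S$, then $\{y\}\cap S\neq\emptyset$; if $y\notin S$, then $y\in\sigma(S)\setminus S$ forces $S\cup\{y\}\notin\F$. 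Either way, $\mathcal D(S)\neq\mathcal D(T)$.

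For the converse, assume $\sigma(S)=\sigma(T)$, fix $U\in\mathcal D(S)$, and show $U\in\mathcal D(T)$; the reverse inclusion follows by symmetry. First, $|S|=r(\sigma(S))=r(\sigma(T))=|T|$. Disjointness $U\cap T=\emptyset$ is immediate: a common element $y$ would lie in $\sigma(T)=\sigma(S)$, yet $U\cup S\in\F$ with $y\notin S$ forces $S\cup\{y\}\in\F$, contradicting $y\in\sigma(S)\setminus S$. For $U\cup T\in\F$ I sandwich the rank of $U\cup T$: the inclusion $T\subseteq\sigma(S)\subseteq\sigma(U\cup S)$ gives $r(U\cup T)\leq r(U\cup S\cup T)=r(U\cup S)=|U|+|S|=|U|+|T|$, while the inclusion $U\cup S\subseteq\sigma(U\cup T)$ (since $S\subseteq\sigma(T)\subseteq\sigma(U\cup T)$) gives $r(U\cup T)\geq r(U\cup S)=|U|+|T|$. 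Hence $r(U\cup T)=|U\cup T|$, so $U\cup T\in\F$.

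The main technical point is the independence of $U\cup T$ in the forward direction: both the upper and lower bounds on $r(U\cup T)$ must be pinned to $|U|+|T|$, which uses the rank-of-closure identity on $U\cup S\cup T$ together with the consequence $|S|=|T|$ of $\sigma(S)=\sigma(T)$. By contrast, the backward direction is settled by testing only against singletons.
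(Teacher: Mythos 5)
Your proof is correct, but it takes a genuinely different route from the paper's. The paper argues that, for independent $U$ with $U\cap S=\emptyset$, membership of $U$ in the set $\mathcal{D}(S):=\{U\in\F\mid U\cap S=\emptyset,\ U\cup S\in\F\}$ is equivalent to $U\cap\sigma(S)=\emptyset$: one direction because $S\cup\{y\}\subseteq S\cup U\in\F$ forces $r(S\cup\{y\})=r(S)+1$ for each $y\in U$, and the other because (so the paper claims) $S\cup U\notin\F$ produces some $y\in U$ with $r(S\cup\{y\})=r(S)$. From this pointwise characterization both implications of the lemma would follow at once. You instead prove the two implications separately: the implication ``$\mathcal{D}(S)=\mathcal{D}(T)\Rightarrow\sigma(S)=\sigma(T)$'' by testing against singletons (which is essentially the paper's ``vice versa'' made explicit), and the converse by a rank sandwich $|U|+|T|\leq r(U\cup T)\leq |U\cup T|=|U|+|T|$ using $r(\sigma(X))=r(X)$ and monotonicity of $\sigma$. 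This is more work, but it is work that needs doing: the paper's second case is false as stated. In the uniform matroid $U_{2,3}$ on $\{a,b,c\}$, take $S=\{c\}$ and $U=\{a,b\}$; then $U\cap S=\emptyset$ and $S\cup U\notin\F$, yet $r(S\cup\{y\})=r(S)+1$ for both $y\in U$, so $U\cap\sigma(S)=\emptyset$ while $U\notin\mathcal{D}(S)$. Hence $\mathcal{D}(S)$ is \emph{not} simply $\{U\in\F\mid U\cap\sigma(S)=\emptyset\}$, and the equality $\mathcal{D}(S)=\mathcal{D}(T)$ for $S\sim T$ really does require an argument comparing $S$ and $T$ directly, exactly as your sandwich does (it uses $S\subseteq\sigma(T)$ and $T\subseteq\sigma(S)$, not merely the value of $\sigma(S)$ as a set met by $U$). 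In short: your proof is complete and correct, and it repairs a gap in the published argument rather than merely rephrasing it.
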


\begin{proof} 
Let $S,U$ be independent sets. If $U\cap S=\emptyset$ and $S\cup U\in \F,$ then $r(S\cup \{ y\})=r(S)+1$ 
for all $y\in U,$ and we have $U\cap \sigma(S) =\emptyset.$ 
If $U\cap S=\emptyset$ and $S\cup U\not\in \F,$ then there exists an element $y\in U$ such that 
$r(S\cup \{ y\})=r(S).$ So we have $U\cap \sigma(S) \not= \emptyset.$ 
Hence $\sigma(S)$ determines the set $\{U \in \F \; | \; U\cap S=\emptyset,U \cup S \in \F \},$ and 
vice versa. 
\end{proof}

\begin{dfn} For a given matroid $M=(E,\F),$ 
the {\em matroid polytope} $P_M$ is defined by the following system of 
inequalities: 
\[ x_e \geq 0 \;\; (e\in E), \;\;\;\;\;\; \sum_{e\in A}x_e \leq r(A) \;\; (A\in 2^E). \] 
\end{dfn}

For each independent set $F\in \F,$ we define the {\em incidence vector} 
$\vec{v}_F=(v_{F,e})_{e\in E} \in \RR^E$ as follows: 
\[ v_{F,e}:=\left\{ \begin{array}{cc} 
1, & \textrm{if $e\in F,$} \\ 
0, & \textrm{otherwise.} 
\end{array} \right. \]

\begin{prop} \label{edmonds} {\rm (Edmonds \cite{Ed})} 
The matroid polytope $P_M$ 
coincides with the convex hull of $\vec{0}$ and the {\em incidence vectors} of $\F$: 
\[ P_M={\rm conv}(\{ \vec{0} \} \cup \{ \vec{v}_F \; | \; F\in \F \} ). \]  
\end{prop}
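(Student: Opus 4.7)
The plan is to prove the two inclusions separately.

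The inclusion $\operatorname{conv}(\{\vec{0}\} \cup \{\vec{v}_F \mid F \in \F\}) \subseteq P_M$ is straightforward: since $P_M$ is convex, being an intersection of half-spaces, it suffices to verify the generators are feasible. Non-negativity is clear, and for each $F \in \F$ and $A \subseteq E$, the intersection $F\cap A$ is independent, so $\sum_{e \in A} v_{F,e} = |F\cap A| = r(F\cap A) \leq r(A)$.

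For the reverse inclusion, I would show that every vertex of $P_M$ is either $\vec{0}$ or $\vec{v}_F$ for some $F \in \F$. Each vertex uniquely maximizes some linear functional $c \cdot x$ on $P_M$, so it is enough to produce, for each $c \in \RR^E$, an element of $\{\vec{0}\} \cup \{\vec{v}_F\}_{F\in\F}$ attaining the maximum. Restricting to elements with $c_e > 0$ (elements with $c_e \leq 0$ can only harm the objective), order $E$ so that $c_{e_1} \geq \cdots \geq c_{e_n} > 0$ and run the matroid greedy algorithm: initialize $F^* = \emptyset$ and insert $e_i$ whenever $F^* \cup \{e_i\} \in \F$. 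Setting $S_i := \{e_1,\ldots,e_i\}$ and $c_{e_{n+1}} := 0$, the telescoping identity $c_{e_i} = \sum_{j \geq i}(c_{e_j}-c_{e_{j+1}})$ rewrites any linear form as
\[
c\cdot x = \sum_{j=1}^n (c_{e_j}-c_{e_{j+1}})\,\sum_{e \in S_j}x_e,
\]
in which all coefficients $c_{e_j}-c_{e_{j+1}}$ are non-negative. Bounding each summand via $\sum_{e\in S_j}x_e\leq r(S_j)$ gives the uniform upper bound $c\cdot x \leq \sum_j (c_{e_j}-c_{e_{j+1}})\,r(S_j)$ for every $x \in P_M$.

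The main obstacle is to show that $\vec{v}_{F^*}$ saturates this bound, which reduces to the identity $|F^*\cap S_j| = r(S_j)$ for every $j$. This is the correctness of the matroid greedy algorithm: after processing $e_j$, the set $F^*\cap S_j$ is a maximal independent subset of $S_j$, since every skipped element $e_i$ was rejected because it created a dependency with the previously chosen elements (which are contained in $F^*\cap S_j$), and by axiom $(M3)$ any two maximal independent subsets of $S_j$ share the same cardinality, namely $r(S_j)$. Once this is in place, $\vec{v}_{F^*}$ attains the upper bound and is therefore an optimizer for $c$; when all $c_e \leq 0$, one instead takes $\vec{0}$. Thus every vertex of $P_M$ lies in the claimed generating set, completing the proof.
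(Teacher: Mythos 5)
Your proof is correct. Note that the paper does not prove this proposition at all: it is stated with attribution to Edmonds and used as a black box, so there is no internal argument to compare against. What you have written is essentially Edmonds' own proof, the standard greedy/LP-duality argument: the easy inclusion follows from $|F\cap A|=r(F\cap A)\le r(A)$, and the hard inclusion follows by showing that for every objective $c$ the greedy independent set $F^*$ attains the upper bound $\sum_j(c_{e_j}-c_{e_{j+1}})r(S_j)$ obtained by telescoping against the rank constraints on the nested sets $S_j$. The one step you compress --- that $F^*\cap S_j$ is a maximal independent subset of $S_j$ and hence has cardinality $r(S_j)$ --- is correctly justified: a rejected $e_i$ already forms a dependency with $F^*\cap S_{i-1}\subseteq F^*\cap S_j$, and $(M3)$ forces all maximal independent subsets of $S_j$ to have the same size. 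Together with the (implicit but standard) facts that $P_M$ is bounded, hence the convex hull of its vertices, and that each vertex is the unique optimizer of some linear functional, this is a complete and self-contained proof.
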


Let $\Delta_M$ be the face of $P_M$ defined by the equation $\sum_{e\in E}x_e=r(E),$ 
which is also obtained as the convex hull of the incidence vectors 
corresponding to the bases of $M.$ 

\begin{ex} \label{non-ssc}
Let $M$ be a matroid defined by the following vectors. 
\[ \begin{array}{c|c|c|c|c} 
v_1 & v_2 & v_3 & v_4 & v_5 \\ 
1 & 0 & 0 & 1 & 0 \\ 
0 & 1 & 0 & 1 & 1 \\ 
0 & 0 & 1 & 0 & 1  
\end{array} \] 
Then $\B=\{ \{1,2,3\},\{1,2,5\},\{1,3,4\},\{1,3,5\},\{1,4,5\},\{2,3,4\},\{2,4,5\}, 
\{ 3,4,5\} \}.$ The polytope $\Delta_M$ is the convex hull 
of the following points in $\RR^5$: 
\[ (1,1,1,0,0),(1,1,0,0,1),(1,0,1,1,0),(1,0,1,0,1), \] 
\[ (1,0,0,1,1),(0,1,1,1,0),(0,1,0,1,1),(0,0,1,1,1) . \] 
\end{ex} 

\section{Gorenstein algebras associated to matroids} \label{main} 
For a matroid $M=(E,\F),$ we define a polynomial $\Phi_M \in k[x_e |e\in E]$ by 
\[ \Phi_M:= \sum_{B\in \B} x_B, \] 
where $x_B:=\prod_{b\in B}x_b.$ 
Note that the Newton polytope of $\Phi_M$ coincides with $\Delta_M$ in $\RR^E.$ 
Let $Q=Q_M=k[ \partial /\partial x_e | e\in E ]$ denote the ring of differential 
polynomials. For a subset $S\subset E,$ we put $x_S:=\prod_{e\in S} x_e$ and 
$\partial^S := \prod_{e\in S}(\partial/\partial x_e).$ 
In the subsequent part of this paper, we discuss 
the structure of the Gorenstein ring $A_M:=Q / \Ann_Q \Phi_M.$  

\begin{prop} The ideal $\Ann \Phi_M$ contains 
\[ \Lambda_M:=\{x_e^2|e\in E\} \cup \{ x_S|S\not\in \F \} \cup \{ x_A-x_{A'}| 
A,A'\in \F, A\sim A' \}. \] 
\end{prop}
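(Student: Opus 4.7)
The plan is to verify that each element of $\Lambda_M$ annihilates $\Phi_M=\sum_{B\in\B}x_B$ when interpreted as a differential operator on $P=k[x_e\mid e\in E]$; under the identification $x_e\leftrightarrow\partial/\partial x_e$, these operators are $(\partial/\partial x_e)^2$, $\partial^S$, and $\partial^A-\partial^{A'}$ respectively.

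The first two families are immediate. Each monomial $x_B$ in $\Phi_M$ is squarefree in the $x_e$, so $(\partial/\partial x_e)^2x_B=0$, whence $(\partial/\partial x_e)^2\Phi_M=0$. For $S\notin\F$, the expansion $\partial^S\Phi_M=\sum_{B\in\B}\partial^Sx_B$ vanishes termwise: $\partial^Sx_B\neq 0$ forces $S\subseteq B$, and then axiom (M2) would give $S\in\F$, a contradiction.

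The main case is the third family. For $A,A'\in\F$ with $A\sim A'$, the goal is $\partial^A\Phi_M=\partial^{A'}\Phi_M$. A direct computation yields
\[ \partial^A\Phi_M=\sum_{B\in\B,\,A\subseteq B}x_{B\setminus A}=\sum_{U\in\mathcal{U}_A}x_U, \]
where $\mathcal{U}_A:=\{U\subseteq E\mid U\cap A=\emptyset,\ A\cup U\in\B\}$. The proposition thus reduces to the identity $\mathcal{U}_A=\mathcal{U}_{A'}$. Lemma \ref{equiv} already supplies the analogous equality with $\B$ replaced by $\F$, so what remains is to observe that passing from ``$A\cup U\in\F$'' to ``$A\cup U\in\B$'' only adds the cardinality constraint $|U|=r(E)-|A|$, and that this constraint is the same for $A$ and $A'$. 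The latter boils down to $|A|=|A'|$: since $A,A'\in\F$ satisfy $|A|=r(A)$ and $|A'|=r(A')$, and since $A$ is a maximal independent subset of $\sigma(A)$ (any $y\in\sigma(A)\setminus A$ with $A\cup\{y\}\in\F$ would yield $r(A\cup\{y\})>r(A)$, contradicting $y\in\sigma(A)$), we have $|A|=r(\sigma(A))=r(\sigma(A'))=|A'|$.

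The only nontrivial step is this last one: bridging Lemma \ref{equiv} from independent sets to bases. It is resolved by the observation that the extra condition ``$A\cup U\in\B$'' reduces to a cardinality constraint preserved under $\sim$, as shown above.
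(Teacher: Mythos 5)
Your proof is correct and follows essentially the same route as the paper: the first two families are disposed of by squarefreeness and axiom (M2), and the third family is reduced to Lemma \ref{equiv}. The only difference is one of completeness rather than method --- the paper simply cites Lemma \ref{equiv} (which concerns extensions inside $\F$) to conclude $\partial^A\Phi_M=\partial^{A'}\Phi_M$ (which concerns extensions to bases in $\B$), whereas you explicitly bridge that gap by noting that ``$A\cup U\in\B$'' is ``$A\cup U\in\F$'' plus the cardinality constraint $\# U=r(E)-\# A$, and that $\# A=r(\sigma(A))=r(\sigma(A'))=\# A'$; this is exactly the detail the paper leaves implicit, and your argument supplies it correctly.
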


\begin{proof} 
Since $\Phi_M$ is square-free and does not contain the monomials of form $x_S,$ 
$S\not\in \F,$ the ideal $\Ann \Phi_M$ contains $\{x_e^2|e\in E\}$ and 
$\{ x_S|S\not\in \F \}.$ If $A,A'\in \F$ are equivalent, then 
we have $\partial^A \Phi_M = \partial^{A'}\Phi_M$ from Lemma \ref{equiv}. 
\end{proof}

We denote by $J_M \subset Q$ the ideal generated by the set $\Lambda_M.$ 
Let $M=(E,\F)$ be a matroid, and $\F_i \subset \F$ for $i=1,\ldots, r(E),$ 
the set of independent sets of cardinality $i,$ i.e., 
\[ \F_i:= \{ F\in \F \; | \; \# F = i \} . \] 
Let $\Omega:=2^E/\sim,$ $\overline{\F}_l:=\F_l/\sim$ and $m_l:=\# \overline{\F}_l.$ 
We can identify $\Omega$ with the set of the flats of $M.$ 
Under this identification, we define the subset $\Omega(l),$ $l=1,\ldots,r(E),$ of 
$\Omega$ by 
\[ \Omega(l):= \{ S\in 2^E \; | \; S=\sigma(S), r(S) = l \}. \] 
For an equivalence class $\tau \in \Omega,$ consider a polynomial $f_{\tau}$ given by 
\[ f_{\tau}:= \sum_{F\in \F \cap \tau}x_F . \] 

\begin{prop}
We have 
\[ J_M= \mathop{\bigcap}_{\tau \in \Omega} \Ann f_{\tau}. \] 
\end{prop}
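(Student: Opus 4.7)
The plan is to establish the two inclusions separately. For $J_M \subset \bigcap_{\tau \in \Omega} \Ann f_\tau$, I would verify that each generator in $\Lambda_M$ annihilates every $f_\tau$. The generators $x_e^2$ and $x_S$ with $S \not\in \F$ are immediate because $f_\tau$ is a squarefree sum of monomials indexed by independent sets. For the generator $x_A - x_{A'}$ with $A,A' \in \F$ and $A \sim A'$, the argument from the previous proposition extends verbatim: expanding $\partial^A f_\tau$ as a sum over $U$ with $U \cap A = \emptyset$ and $U \cup A \in \F \cap \tau$ (and symmetrically for $A'$), Lemma~\ref{equiv} identifies the conditions $U \cap A = \emptyset,\, U \cup A \in \F$ and $U \cap A' = \emptyset,\, U \cup A' \in \F$, while the closure identity $\sigma(A \cup U) = \sigma(\sigma(A) \cup U) = \sigma(\sigma(A') \cup U) = \sigma(A' \cup U)$ ensures that membership in $\tau$ is preserved.

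For the reverse inclusion, I would first observe that modulo $J_M$ every $\varphi \in Q$ admits a representation
\[ \varphi \equiv \sum_{\tau \in \Omega} c_\tau \partial^{A_\tau} \pmod{J_M}, \]
where $A_\tau$ is a chosen independent representative of $\tau$ (every flat of $M$ contains one, obtained as a maximal independent subset). Indeed, the relations in $J_M$ kill squares, eliminate $\partial^S$ for $S \not\in \F$, and identify $\partial^A$ with $\partial^{A_\tau}$ whenever $A \sim A_\tau$ in $\F$. Suppose now that $\varphi \in \bigcap_\tau \Ann f_\tau$ while some $c_\tau \neq 0$, and let $\tau_0$ be minimal in the lattice of flats of $M$ among classes with $c_{\tau_0} \neq 0$. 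Then in $0 = \varphi f_{\tau_0}$, the $J_M$-part contributes nothing by the forward inclusion, and each remaining term is $c_\tau \sum_{F \in \F \cap \tau_0,\, A_\tau \subset F} x_{F \setminus A_\tau}$. The containment $A_\tau \subset F$ forces $\tau = \sigma(A_\tau) \leq \sigma(F) = \tau_0$, so the term vanishes unless $\tau \leq \tau_0$; for $\tau < \tau_0$, minimality gives $c_\tau = 0$; and for $\tau = \tau_0$, the equality $|A_{\tau_0}| = r(\tau_0) = |F|$ combined with $A_{\tau_0} \subset F$ forces $F = A_{\tau_0}$, so the sum collapses to the constant $1$. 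Thus $0 = \varphi f_{\tau_0} = c_{\tau_0}$, a contradiction, so every $c_\tau$ vanishes and $\varphi \in J_M$.

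The main conceptual step is the minimality argument, which uses the flat-lattice order on $\Omega$ to isolate a single surviving term in $\varphi f_{\tau_0}$ and pick off its coefficient. The main technical subtlety lies in the forward inclusion for $x_A - x_{A'}$, where the equivalence $A \sim A'$ must persist after augmenting by a common independent set $U$; this is precisely the closure compatibility $\sigma(\sigma(S) \cup U) = \sigma(S \cup U)$ combined with the matching provided by Lemma~\ref{equiv}.
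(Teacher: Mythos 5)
Your proof is correct and follows essentially the same route as the paper: both reduce modulo the monomial and binomial generators of $J_M$ to a sum organized by equivalence classes, and then apply the operator to $f_{\tau_0}$ for a minimal class $\tau_0$ to isolate and kill the offending coefficient. The only cosmetic differences are that you normalize to a single representative $\partial^{A_\tau}$ per class where the paper keeps the whole block $p_\tau$ and disposes of it with a telescoping identity, and that you take $\tau_0$ minimal in the lattice of flats where the paper takes it of minimum rank.
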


\begin{proof}
It is easy to see that $\Lambda_M$ is contained in $\cap_{\tau \in \Omega}\Ann f_{\tau}.$ 
It is enough to show that a polynomial $p\in \cap_{\tau \in \Omega}\Ann f_{\tau}$ of form 
\[ p= \sum_{\tau \in \Omega}\sum_{F \in \F \cap \tau} a_F x_F, \;\; a_F \in k, \] 
is a linear combination of polynomials of $\Lambda_M.$ 
Put $p_{\tau}:= \sum_{F \in \F \cap \tau} a_F x_F$ and consider the polynomial 
\[ p':= \sum_{\tau \in \Omega,p_{\tau}\not\in \Lambda_M}p_{\tau}. \] 
Choose $\tau_0\in \Omega$ with $p_{\tau}\not=0$ of minimum rank. Then 
\[ p(\partial)f_{\tau_0}=p_{\tau_0}(\partial)f_{\tau_0}=\sum_{F\in \F \cap \tau_0}a_F=0. \] 
Let $\F \cap \tau= \{ F_1,\ldots,F_s \}.$ 
Then we have 
\[ p_{\tau}=a_{F_1}(x_{F_1}-x_{F_2})+(a_{F_1}+a_{F_2})(x_{F_2}-x_{F_3})+ 
\cdots +(a_{F_1}+\cdots + a_{F_{s-1}})(x_{F_{s-1}}-x_{F_s}) . \] 
\end{proof}

\begin{prop} 
The subset $\Lambda_M$ of $Q$ is a universal 
Gr\"obner basis of $J_M.$ 
\end{prop}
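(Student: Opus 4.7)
My plan is to fix an arbitrary monomial order $<$ on $Q$ and show that $\Lambda_M$ is a Gr\"obner basis of $J_M$ with respect to $<$; since $<$ is arbitrary, this yields the universal Gr\"obner basis property. The strategy is to establish $\ini_<(J_M) = \langle \ini_<(\Lambda_M) \rangle$ via a dimension count, as the reverse inclusion $\langle \ini_<(\Lambda_M) \rangle \subseteq \ini_<(J_M)$ is automatic.

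First I would identify the standard monomials of $\langle \ini_<(\Lambda_M) \rangle$. The leading monomials of $\Lambda_M$ are the $x_e^2$, the $x_S$ for dependent $S$, and, for each pair $\{A,A'\}$ of distinct equivalent independent sets, the larger of $x_A$ and $x_{A'}$. Consequently the monomials not in $\langle \ini_<(\Lambda_M) \rangle$ are exactly $\{ x_{F_\tau} \mid \tau \in \Omega \}$, where $F_\tau$ denotes the $<$-minimum element of $\F \cap \tau$; in particular $\dim_k Q/\langle \ini_<(\Lambda_M) \rangle = |\Omega|$.

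Next I would show that the $|\Omega|$ classes $\{ [x_{F_\tau}] \}_{\tau \in \Omega}$ remain linearly independent in the smaller quotient $Q/J_M$. Suppose $p = \sum_\tau c_\tau x_{F_\tau} \in J_M$. By the previous proposition $J_M = \bigcap_\sigma \Ann f_\sigma$, so $p(\partial) f_\sigma = 0$ in $P$ for every flat $\sigma$. Expanding,
\[
p(\partial) f_\sigma = \sum_\tau c_\tau \sum_{\substack{G \in \F \cap \sigma \\ F_\tau \subseteq G}} x_{G \setminus F_\tau},
\]
so the constant term comes only from summands with $G = F_\tau \in \F \cap \sigma$. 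Now $F_\tau \in \F \cap \sigma$ means $F_\tau \subseteq \sigma$ and $|F_\tau| = r(\sigma)$; combined with $|F_\tau| = r(\tau)$, this gives $\tau \subseteq \sigma$ and $r(\tau) = r(\sigma)$, forcing $\tau = \sigma$. Hence the constant term of $p(\partial)f_\sigma$ is exactly $c_\sigma$, so $c_\sigma = 0$ for every $\sigma$.

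Combining the two steps, $\{[x_{F_\tau}]\}_{\tau \in \Omega}$ forms a basis of $Q/J_M$, giving
\[
\dim_k Q/\langle \ini_<(\Lambda_M) \rangle = |\Omega| = \dim_k Q/J_M = \dim_k Q/\ini_<(J_M),
\]
and together with $\langle \ini_<(\Lambda_M) \rangle \subseteq \ini_<(J_M)$, this forces equality of the two initial ideals, so $\Lambda_M$ is a Gr\"obner basis for $<$. The main technical hurdle is the rank-comparison step showing that $F_\tau \in \F \cap \sigma$ forces $\tau = \sigma$, which relies on the elementary fact that a flat is determined by any one of its bases (a flat $\tau$ with $\tau \subseteq \sigma$ and $r(\tau) = r(\sigma)$ must equal $\sigma$).
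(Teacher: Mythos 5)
Your argument is correct, but it takes a genuinely different route from the paper's. The paper verifies Buchberger's criterion directly: it forms the $S$-polynomial of each pair of elements of $\Lambda_M$ and, through a four-case analysis using submodularity of the rank function, shows that each one reduces to zero modulo $\Lambda_M$. You instead run a standard-monomial dimension count: the monomials outside $\langle \ini_<(\Lambda_M)\rangle$ are contained in $\{x_{F_\tau}\}_{\tau\in\Omega}$, while the inclusion $J_M\subseteq\bigcap_\tau \Ann f_\tau$ from the preceding proposition (only this easy direction is needed, so there is no circularity) shows that these $|\Omega|$ classes are linearly independent in $Q/J_M$; the sandwich
\[
|\Omega|\le\dim_k Q/J_M=\dim_k Q/\ini_<(J_M)\le\dim_k Q/\langle\ini_<(\Lambda_M)\rangle\le|\Omega|
\]
then forces $\langle\ini_<(\Lambda_M)\rangle=\ini_<(J_M)$ for the arbitrary order $<$. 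Your approach avoids the rank-function case analysis entirely and delivers Corollary \ref{hilb} (the Hilbert function of $Q/J_M$) in the same breath, at the cost of leaning on the description of $J_M$ by the forms $f_\tau$; the paper's approach is more self-contained and exhibits the explicit reductions, which is information your argument does not produce. One small point of precision: your claim that the standard monomials are \emph{exactly} $\{x_{F_\tau}\}$ requires the additional observation that every subset of a $<$-minimal independent set is itself $<$-minimal in its class (otherwise some $x_{F_\tau}$ could be divisible by the leading term $x_A$ of a binomial with $A\subsetneq F_\tau$); this is true, by replacing such an $A$ with its smaller equivalent $A'$ to produce a smaller basis of the same flat, but your proof only uses the inclusion of the standard monomials into $\{x_{F_\tau}\}$, and the reverse inclusion falls out of the sandwich anyway, so this is a presentational matter rather than a gap.
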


\begin{proof} The proof is based on Buchberger's criterion. 
Fix a monomial ordering $\leq$ on the polynomial ring $Q.$ For non-zero monic polynomials 
$f,g\in Q,$ the $S$-polynomial $S(f,g)$ is given as follows: 
\[ S(f,g):= -\frac{\Gamma(f,g)}{\ini_{\leq}(f)}f+\frac{\Gamma(f,g)}{\ini_{\leq}(g)}g, 
\;\;\; \Gamma(f,g):=\LCM(\ini_{\leq}(f),\ini_{\leq}(g)) . \] 
Let $\Lambda_1:=\{ x_A-x_{A'}\; | \; A,A'\in \F, A\sim A' \},$ 
$\Lambda_2:=\{x_e^2|e\in E\}$ and $\Lambda_3:=\{ x_S|S\not\in \F \}.$ 
We will show that the $S$-polynomials $S(f,g)$ 
are reduced to zero by the division algorithm with respect to 
$\Lambda_M\setminus \{f,g\}$ for cases: \\ 
(i) $f,g\in \Lambda_1,$ (ii) $f\in \Lambda_1,$ $g\in \Lambda_2,$ 
(iii) $f\in \Lambda_1,$ $g\in \Lambda_3,$ (iv) $f,g\in \Lambda_2 \cup \Lambda_3.$ 
\medskip \\ 
Case (i): Take polynomials $f:=x_A-x_{A'},$ $g:=x_B-x_{B'} \in \Lambda_1$ with 
$x_A >x_{A'}$ and $x_B > x_{B'}.$ If $A\cap B=\emptyset,$ it is easy to see that 
$S(f,g)$ is reduced to zero. Assume that $A\cap B \not= \emptyset.$ Let 
$C:= A\cap B,$ $\hat{A}=A \setminus C$ and $\hat{B}= B \setminus C.$ Then we have 
$S(f,g)=x_{A'}x_{\hat{B}}-x_{B'}x_{\hat{A}}.$ 
Note that we have 
\begin{align*} 
r(A'\cup \hat{B})=r(A\cup \hat{B})=r(\hat{A}\cup C \cup \hat{B}), \\ 
r(B'\cup \hat{A})=r(B\cup \hat{A})=r(\hat{A}\cup C \cup \hat{B}), 
\end{align*} 
so $r(A'\cup \hat{B})=r(B'\cup \hat{A}).$ \\ 
(i-1) If $A'\cap \hat{B}\not= \emptyset,$ then $x_{A'}x_{\hat{B}}\in \Lambda_2.$ 
In this case, we have 
\[ (*) \;\;\; r(\hat{A}\cup B')=r(A'\cup \hat{B})< r(A')+r(\hat{B})=\# A' + \# \hat{B} = 
\# \hat{A} + \# B' , \] 
which means that $\hat{A}\cap B'\not= \emptyset$ or $\hat{A} \cup B'\not\in \F.$ 
Hence we also have $x_{\hat{A}}x_{B'} \in \Lambda_2 \cup \Lambda_3.$ \\ 
(i-2) Assume that $A'\cap \hat{B}=\emptyset.$ If $A'\cup \hat{B} \not\in \F,$ 
then we have $x_{A'}x_{\hat{B}}\in \Lambda_3.$ Moreover, again from 
the inequality $(*),$ we see that $x_{\hat{A}}x_{B'} \in \Lambda_2 \cup \Lambda_3.$ 
If $A'\cup \hat{B} \in \F,$ we have 
\[ r(\hat{A}\cup B')=r(A'\cup \hat{B})=r(A')+r(\hat{B}) =\# A' + \# \hat{B} = 
\# \hat{A} + \# B', \] 
which means that $\hat{A}\cup B' \in \F.$ Hence we have 
$S(f,g)=x_{A'}x_{\hat{B}}-x_{B'}x_{\hat{A}} \in \Lambda_1.$ \\ 
Case (ii): Take polynomials $f:=x_A-x_{A'} \in \Lambda_1$ and $g:= x_e^2 \in \Lambda_2$ 
with $x_A >x_{A'}.$ If $e\not\in A,$ then $S(f,g)=x_e^2x_{A'}$ is reduced to zero. 
If $e\in A,$ then 
$S(f,g)=x_ex_{A'}.$ Since $r(A'\cup \{ e \})=r(A\cup \{ e \})=r(A),$ we have 
$x_ex_{A'} \in \Lambda_2 \cup \Lambda_3.$ \\ 
Case (iii): Take polynomials $f:=x_A-x_{A'} \in \Lambda_1$ and $g:= x_B \in \Lambda_3$ 
with $x_A >x_{A'}.$ If $A\cap B= \emptyset,$ then 
$S(f,g)=x_{A'}x_B$ is reduced to zero. 
If $A\cap B \not= \emptyset,$ then $S(f,g)=x_{A'}x_{B\setminus A}.$ 
The inequality 
\[ r(A'\cup (B\setminus A))=r(A\cup (B\setminus A))=r(A\cup B)< \# (A\cup B)= 
\# (A'\cup (B\setminus A)) \] 
implies that $x_{A'}x_{B\setminus A}\in \Lambda_2 \cup \Lambda_3.$ \\ 
Case (iv): This case is easy because $\Lambda_2$ and $\Lambda_3$ are 
consisting of monomials. 
\end{proof}
 
\begin{cor} \label{hilb} 
The Hilbert polynomial of $Q/J_M$ is given by 
\[ \Hilb(Q/J_M,t)= \sum_{i=0}^{r(E)} (\# \bar{\F}_i)t^i . \] 
\end{cor}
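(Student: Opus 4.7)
The plan is to extract the Hilbert polynomial directly from the fact that $\Lambda_M$ is a universal Gr\"obner basis of $J_M$, by identifying a monomial $k$-basis of $Q/J_M$ indexed by $\bigsqcup_i \overline{\F}_i$.

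Fix any monomial order $\leq$ on $Q$. Since $\Lambda_M$ is a Gr\"obner basis of $J_M$ with respect to $\leq$, we have $\Hilb(Q/J_M,t) = \Hilb(Q/\ini_\leq(J_M),t)$, and the latter algebra has as $k$-basis the set of monomials not lying in the monomial ideal $\ini_\leq(J_M)$. The initial ideal is generated by
\[ \{x_e^2 \mid e\in E\} \cup \{x_S \mid S\notin \F\} \cup \{\ini_\leq(x_A - x_{A'}) \mid A,A'\in \F,\, A\sim A'\}. \]
The first two families of generators force every standard monomial to be of the form $x_F$ with $F\in \F$ an independent set.

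For each equivalence class $\tau \in \Omega$ with $\tau \cap \F \neq \emptyset$, consider the binomials $\{x_A - x_{A'} \mid A,A'\in \F\cap \tau\}$. Their leading terms are precisely the monomials $\{x_A \mid A \in \F\cap \tau\}$ except the unique $\leq$-smallest one. Thus from each equivalence class $\tau$ of independent sets, exactly one monomial $x_F$ survives as a standard monomial. Conversely, any $x_F$ with $F \in \F$ that is $\leq$-minimal in its equivalence class cannot be the leading term of any element of $\Lambda_M$, so it is standard.

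Hence the standard monomials are in bijection with $\F/\sim = \bigsqcup_{i=0}^{r(E)} \overline{\F}_i$. For any $A,A'\in \F$ with $A\sim A'$, both $A$ and $A'$ are bases of the flat $\sigma(A) = \sigma(A')$, so $\#A = \#A' = r(\sigma(A))$; in particular, each equivalence class $\tau \in \overline{\F}_i$ consists entirely of sets of cardinality $i$, and the corresponding standard monomial has degree $i$. Counting standard monomials by degree gives
\[ \Hilb(Q/J_M,t) = \sum_{i=0}^{r(E)} (\#\overline{\F}_i)\, t^i, \]
as required. The only mildly subtle point is the observation that equivalence under $\sim$ preserves cardinality on independent sets, which ensures the graded bookkeeping works; the rest is a direct application of the Gr\"obner basis property already established.
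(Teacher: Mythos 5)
Your overall route --- reading off the Hilbert function from the standard monomials of $\ini_{\leq}(J_M)$ via the universal Gr\"obner basis $\Lambda_M$ --- is exactly the one the paper intends: the corollary is stated without a separate proof, as an immediate consequence of the preceding Gr\"obner basis proposition. The reduction to squarefree monomials $x_F$ with $F\in\F$, the ``at most one standard monomial per equivalence class'' direction, and the observation that $\sim$ preserves cardinality on independent sets are all correct.

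There is, however, a gap in the converse direction. You assert that if $F$ is $\leq$-minimal in its class then $x_F$ ``cannot be the leading term of any element of $\Lambda_M$, so it is standard.'' Standardness requires more: $x_F$ must not be \emph{divisible} by the leading term of any element of $\Lambda_M$. A priori a proper subset $A\subsetneq F$ could fail to be $\leq$-minimal in \emph{its} equivalence class, in which case $x_A\in\ini_{\leq}(J_M)$ divides $x_F$ and $x_F$ is not standard even though $F$ is minimal in $[F]$. You must rule this out. One way: suppose $A\subsetneq F$ and $A'\sim A$ with $x_{A'}<x_A$. Applying Lemma \ref{equiv} to $U:=F\setminus A$ (which is independent, disjoint from $A$, and has $U\cup A=F\in\F$) gives $A'\cap U=\emptyset$ and $F':=A'\cup U\in\F$; moreover $\sigma(F')\supseteq\sigma(A')\cup U=\sigma(A)\cup U$ forces $\sigma(F')=\sigma(F)$, so $F'\sim F$, and multiplicativity of the monomial order gives $x_{F'}=x_{A'}x_{U}<x_{A}x_{U}=x_F$, contradicting the minimality of $F$ in its class. (Alternatively, one can obtain the missing lower bound $\dim(Q/J_M)_i\geq\#\bar{\F}_i$ from the identity $J_M=\bigcap_{\tau\in\Omega}\Ann f_{\tau}$: for $\tau$ a flat of rank $i$, applying a linear combination of the $\partial^{F}$, one representative $F$ per class in $\F_i$, to $f_{\tau}$ shows these representatives are linearly independent modulo $J_M$.) With either patch the argument is complete.
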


\begin{ex} 
Let $M$ be the matroid as defined in Example \ref{non-ssc}. Then 
the ideal 
$\Ann \Phi_M$ contains an additional generator other than $\Lambda_M.$ 
In fact, we have 
\[ \Ann \Phi_M = J_M+ 
(x_{13}+x_{45}-x_{15}-x_{34}) . \] 
The Hilbert series of $Q/\Ann \Phi_M$ is $(1,5,5,1)$ and that of $Q/J_M$ 
is $(1,5,6,1).$ In particular, $Q/J_M$ is not Gorenstein. By direct computation, 
we get 
\[ \Hess \Phi_M = 8(x_1+x_4)(x_3+x_5)\Phi_M. \] 
This implies that $Q/\Ann \Phi_M$ has the Lefschetz property. 
\end{ex}

\section{Vector space lattice} 
In this section we treat the matroid $M=M(q,n)$ defined in Example \ref{vec-sp}. 
We define polynomials $\Phi_M^{(i)}:=\sum_{G\in \F_i}x_G$ for $i=1,\ldots,n.$ 
Note that $\Phi_M^{(n)}=\Phi_M.$ 
\begin{lem} \label{basis} 
For $M=M(q,n)$ and $l\leq [n/2],$  
the polynomials $\partial^{F}\Phi_M^{(2l)},$ $F\in \bar{\F}_l,$ are linearly independent 
over $k.$ 
\end{lem}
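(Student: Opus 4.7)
The plan is to reduce the statement to the non-singularity of a specific $|\Omega(l)|\times|\Omega(l)|$ zero-one matrix, and then to address that invertibility via the $GL_n(\FF_q)$-symmetry.

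First I would compute directly: for any $F\in \F_l$,
\[
\partial^F \Phi_M^{(2l)} = \sum_{G\in \F_{2l},\, G\supseteq F} x_{G\setminus F} = \sum_{H\in \F_l,\, F\cup H\in \F_{2l}} x_H ,
\]
and for $M=M(q,n)$ the condition $F\cup H\in \F_{2l}$ translates (via Example \ref{vec-sp}) into $\sigma(F)$ and $\sigma(H)$, viewed as $l$-dimensional subspaces of $\FF_q^n$, intersecting trivially. By Lemma \ref{equiv} this makes $\partial^F\Phi_M^{(2l)}$ depend only on $\bar F\in \bar\F_l$, and $\bar F\mapsto \sigma(F)$ identifies $\bar\F_l$ with $\Omega(l)$. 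Comparing the coefficient of each monomial $x_H$ in a purported relation $\sum_{\bar F} c_{\bar F}\,\partial^F \Phi_M^{(2l)}=0$ converts it into the linear system
\[
\sum_{V\in \Omega(l),\, V\cap W=0} c_V = 0 \qquad (W\in \Omega(l)) ,
\]
so the lemma is equivalent to the non-singularity of the symmetric matrix $A$ whose $(V,W)$-entry is $1$ if $V\cap W=0$ and $0$ otherwise.

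Conceptually, $A$ is exactly the Gram matrix at middle degree of the Poincar\'e duality pairing on the Gorenstein algebra $A_M^{(2l)}:=Q/\Ann_Q \Phi_M^{(2l)}$ (Gorenstein by Proposition \ref{Gor}), computed in the spanning set $\{x_F\}_{\bar F\in \bar\F_l}$ of $(A_M^{(2l)})_l$. Equivalently, non-singularity of $A$ amounts to saying that the natural surjection $(Q/J_M)_l \twoheadrightarrow (A_M^{(2l)})_l$, available because $J_M\subseteq \Ann_Q \Phi_M^{(2l)}$, is an isomorphism; its source has dimension $\#\bar\F_l$ by Corollary \ref{hilb}.

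The remaining, and principal, step is to prove $A$ is non-singular. My plan is to exploit that $A$ commutes with the natural action of $GL_n(\FF_q)$ on $\Omega(l)$. Because $l\leq n/2$, the permutation module $k[\Omega(l)]$ is multiplicity-free, decomposing as $\bigoplus_{j=0}^{l} U_j$ into $l+1$ pairwise non-isomorphic irreducible $GL_n(\FF_q)$-modules; by Schur's lemma $A$ acts as a scalar $\lambda_j$ on each $U_j$, so non-singularity reduces to $\lambda_j\neq 0$ for every $j$. These $\lambda_j$ are the eigenvalues attached to the transversal-intersection class of the Grassmann association scheme $J_q(n,l)$, given in closed form by values of $q$-Eberlein (dual $q$-Hahn) polynomials, and their non-vanishing for $l\leq n/2$ is classical and closely related to Kantor's full-rank theorem \cite{Ka} for $q$-inclusion matrices. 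This eigenvalue non-vanishing is the main obstacle; granted it, the remainder of the proof is purely formal.
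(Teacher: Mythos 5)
Your reduction is correct, and your overall route is genuinely different from the paper's --- in fact it runs in the opposite direction. You convert the lemma into the non-singularity of the disjointness matrix $A=(\delta_{V,W})_{V,W\in\Omega(l)}$ (entry $1$ iff $V\cap W=0$) and then attack that matrix through the spectral theory of the Grassmann association scheme: multiplicity-freeness of $k[\Omega(l)]$ for $l\le n/2$, Schur's lemma, and the non-vanishing of the eigenvalues $\lambda_j=(-1)^jq^{\binom{j}{2}+l(l-j)}\binom{n-l-j}{l-j}_q$, all nonzero precisely because $n\ge 2l$. The paper goes the other way: it proves the lemma first by an elementary triangularity argument --- stratifying $\F_l$ by $i=\dim(\langle A\rangle\cap\langle B\rangle)$ and showing that the transition matrix $(a^B_{i,l-j})$ between the averaged derivatives $P(B,i)\Phi_M^{(2l)}$ and the visibly independent polynomials $\Phi(B,j)$ is upper triangular with positive diagonal --- and only afterwards, in Theorem \ref{main}, deduces $\det A\ne 0$ from the lemma via Poincar\'e duality for $Q/\Ann\Phi_M^{(2l)}$. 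Both arguments rest on the same homogeneity of the Grassmannian (your $GL_n(\FF_q)$-equivariance is exactly the paper's observation that $\#\F_l^{A'}(B,i)$ depends only on the class of $A'$), but the paper's version is self-contained and avoids representation theory, which matters here because the paper's announced aim is a proof of the Sperner property of $V(q,n)$ not resting on imported rank results. Two caveats on your version: the pointer to Kantor \cite{Ka} is slightly off target, since that theorem concerns the inclusion matrices between different levels rather than the intra-level disjointness matrix --- what you actually need is Delsarte's computation of the eigenmatrix of the Grassmann scheme (equivalently the $q$-Kneser graph spectrum); and since that eigenvalue non-vanishing carries the entire weight of the lemma, a complete write-up must either reproduce the computation or cite it precisely --- as written, your proposal is a correct reduction together with a correct but unproved assertion at the crux, whereas the paper's triangularity argument is the elementary substitute for exactly that step.
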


\begin{proof} 
In the following, $\langle S \rangle$ stands for a linear subspace 
in $\FF_q^n$ spanned by a subset $S\subset \PP^{n-1}(\FF_q).$ 
For $B\in \F_l$ and $0\leq i\leq l,$ define 
\[ \F_l(B,i):= \{ A\in \F_l \; | \; 
\dim (\langle A \rangle \cap \langle B \rangle )=i \} . \] 
Then we have 
$\F_l(B,l)= \{ A \in \F_l \; | \; A \sim B \}$ 
and 
\[ \F_l= \mathop{\bigcup}_{i=0}^l \F_l(B,i) . \] 
For $A,B\in \F_l,$ we also define 
\begin{eqnarray*} 
\F_l^A(B,i) & := & \{ A' \in \F_l(B,i) \; | \; 
\langle A \rangle \cap \langle A' \rangle = \{ \vec{0} \} \} \\ 
 & = & \{ A' \in \F_l(B,i) \; | \; 
A\cup A' \in \F_{2l} \}. 
\end{eqnarray*} 
For $B\in \F_l,$ consider a polynomial 
$\Phi(B,i):= \sum_{A\in \F_l(B,i)}x_A$ and a differential polynomial 
$P(B,i):= \sum_{A\in \F_l(B,i)}\partial^A.$ 
We have 
\begin{eqnarray*}
P(B,i)\Phi_M^{(2l)} & = & \sum_{A\in \F_l(B,i)}\partial^A \Phi_M^{(2l)} \\ 
 & = & \sum_{A\in \F_l(B,i)} \sum_{\substack{A'\in \F_l \\ A\cup A' \in \F_{2l}}} x_{A'} \\ 
 & = & \sum_{A'\in \F_l} \sum_{\substack{A\in \F_l(B,i) \\ A\cup A' \in \F_{2l}}} x_{A'} \\ 
 & = & \sum_{j=0}^l \sum_{A'\in \F_l(B,j)} \# 
\{ A\in \F_l(B,i) | A \cup A' \in \F_{2l}\} x_{A'} \\ 
 & = & \sum_{j=0}^l \sum_{A'\in \F_l(B,j)} \# \F_l^{A'}(B,i) x_{A'}. 
\end{eqnarray*}
Here, $\# \F_l^{A'}(B,i)$ is independent of the choice of 
$A'\in \F_l(B,j)$ for $M=M(q,n).$ 
Put $a_{ij}^B:=\# \F_l^{A'}(B,i)$ for $B\in \F_l$ and 
$A'\in \F_l(B,j).$ Now we have 
\[ P(B,i)\Phi_M^{(2l)}= \sum_{j=1}^l a_{ij}^B \sum_{A'\in \F_l(B,j)}x_{A'} 
 = \sum_{j=1}^l a_{ij}^B \Phi(B,j). \] 

If $i+j>l,$ then $\dim(\langle A \rangle \cap \langle B \rangle) + 
\dim (\langle A' \rangle \cap \langle B \rangle)= i+j >  l.$ Hence, we have 
$\dim ( \langle A \rangle \cap \langle A' \rangle \cap \langle B \rangle ) >0$ 
and $\langle A \rangle \cap \langle A' \rangle \not= \{ \vec{0} \}.$ 
This means that $a_{ij}^B=\# \F_l^{A'}(B,i)=0.$ 

Assume that $i+j=l.$ For $A\in \F_l(B,j),$ take an element $A_1 \in \F_j$ 
such that $\langle A_1 \rangle = \langle A \rangle \cap \langle B \rangle.$ 
We also take an element $A_2 \in \F_{l-j}=\F_i$ such that 
$\langle A_1 \cup A_2 \rangle = \langle B \rangle,$ and 
$A_3 \in \F_{n-l}$ such that $\langle B \cup A_3 \rangle = \FF_q^n.$ 
Put $A^*:=A_2 \cup A_3.$ Since $\dim \langle A^* \rangle =n-j\geq n-l \geq l,$ 
there exists an element $A'\in \F_l$ such that 
$\langle A^* \rangle \cap \langle B \rangle \subset 
\langle A' \rangle \subset \langle A^* \rangle .$ Since 
$\langle A' \rangle \cap \langle B \rangle = \langle A^* \rangle \cap 
\langle B \rangle = \langle A_2 \rangle,$ we can see that $A' \in \F_l^{A}(B,i).$ 
Hence we have $a_{ij}^B>0$ in this case. 

We have seen that the matrix $(a_{i,l-j}^B)_{i,j=0}^l$ is upper-triangular, 
so 
\[ \det (a_{i,l-j}^B)_{ij}=\prod_{i=0}^l a_{i,l-i}^B >0. \] 
Since the matrix $(a_{i,l-j})_{ij}$ is invertible, 
$\Phi_M(B,l)$ is written as a linear combination of 
$P(B,0)\Phi_M^{(2l)},P(B,1)\Phi_M^{(2l)},\ldots,P(B,l)\Phi_M^{(2l)},$ and hence 
it is a linear combination of the polynomials $\partial^F\Phi_M^{(2l)},$ $F\in \bar{\F}_l.$ 
On the other hand, it is easy to see the linear-independency of the 
polynomials $\Phi_M(B,l),$ $B\in \bar{\F}_l.$ Therefore 
the polynomials $\partial^F \Phi_M^{(2l)},$ $F\in \bar{\F}_l,$ are linearly independent. 
\end{proof} 

\begin{thm} \label{main}
Let $M=M(q,n).$ 
Take a representative $F_1,\ldots,F_{m_l}\in \F_l$ of $\overline{\F}_l.$  
Then the determinant of the matrix 
\[ \left( \partial^{F_i}\partial^{F_j} \Phi_M \right)_{i,j=1}^{m_l} \] 
is not identically zero. 
\end{thm}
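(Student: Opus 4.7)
The plan is to specialize the Hessian polynomial at the point $x_e = 1$ for all $e \in E$ and to show that the resulting scalar matrix is invertible; since the Hessian determinant is a polynomial in $x$, a single nonzero evaluation suffices. Because $\Phi_M$ is square-free, $\partial^{F_i}\partial^{F_j}\Phi_M = \partial^{F_i \cup F_j}\Phi_M$ when $F_i \cap F_j = \emptyset$ and the expression vanishes otherwise. Evaluating at $x = \mathbf{1}$ yields $\#\{B \in \F_n : B \supset F_i \cup F_j\}$, and by the transitivity of the $GL_n(\FF_q)$-action on $\F_{2l}$ this number equals a fixed positive integer $N$ whenever $F_i \cup F_j \in \F_{2l}$, and $0$ otherwise. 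Consequently the evaluated Hessian equals $N \cdot C$, where $C = (C_{ij})$ is the symmetric $0/1$ matrix indexed by $\bar{\F}_l \times \bar{\F}_l$ with $C_{ij} = 1$ iff $\dim(\langle F_i \rangle \cap \langle F_j \rangle) = 0$. Since $N \neq 0$ in the characteristic-zero field $k$, it remains to prove $\det C \neq 0$.

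To show $C$ is invertible I would invoke Lemma \ref{basis}. Let $Y$ be the $m_l \times (\#\F_l)$ coefficient matrix whose $(i, A)$-entry is the coefficient of $x_A$ in $p_i := \partial^{F_i}\Phi_M^{(2l)}$; directly, $Y_{i,A} = 1$ iff $\dim(\langle A \rangle \cap \langle F_i \rangle) = 0$. Lemma \ref{basis} asserts that the $p_i$ are linearly independent, so $\mathrm{rank}(Y) = m_l$. The key observation is that $Y_{i,A}$ depends only on the class $[A] \in \bar{\F}_l$, which yields a factorization $Y = C \cdot \Pi$, where $\Pi$ is the $m_l \times (\#\F_l)$ indicator matrix of the partition of $\F_l$ into $\sim$-classes. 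Since the rows of $\Pi$ have pairwise disjoint supports, $\mathrm{rank}(\Pi) = m_l$, and then the chain $m_l = \mathrm{rank}(Y) = \mathrm{rank}(C\Pi) \leq \mathrm{rank}(C) \leq m_l$ forces $C$ to be invertible.

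The main conceptual hurdle is the reduction step: verifying that the evaluation $H(\mathbf{1})$ really equals $N \cdot C$, which rests on the $GL_n(\FF_q)$-symmetry of $M(q,n)$ (so that the count of basis extensions is uniform across all $2l$-element independent subsets). Once this reduction is in place, the rank-factorization argument using Lemma \ref{basis} is routine. As a side benefit, this specialization also identifies $L = \sum_{e \in E} X_e$ as a strong Lefschetz element of $A_M$ at level $l$, since $(n - 2l)!\cdot H(\mathbf{1})$ is precisely the pairing matrix on $A_l$ induced by $L^{n-2l}$.
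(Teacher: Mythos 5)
Your proposal is correct and follows the same overall strategy as the paper: specialize the matrix at $x=\mathbf{1}$, use the transitivity of $GL_n(\FF_q)$ on independent $2l$-subsets to write the result as $N\cdot C$ with $C_{ij}=1$ iff $\langle F_i\rangle\cap\langle F_j\rangle=\{\vec 0\}$, and then reduce the invertibility of $C$ to Lemma \ref{basis}. The one place where you diverge is the last step. The paper observes that $C=\bigl(\partial^{F_i}\partial^{F_j}\Phi_M^{(2l)}\bigr)_{i,j}$ and that, since Lemma \ref{basis} makes $\{X_{F_i}\}$ a basis of the middle component $B^{(2l)}_l$ of the Gorenstein algebra $B^{(2l)}=Q/\Ann\Phi_M^{(2l)}$, the matrix $C$ represents the Poincar\'e duality pairing $B^{(2l)}_l\times B^{(2l)}_l\to B^{(2l)}_{2l}\cong k$ and is therefore non-degenerate. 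You instead take the full coefficient matrix $Y$ of the polynomials $\partial^{F_i}\Phi_M^{(2l)}$, note that its columns are constant on $\sim$-classes, factor $Y=C\Pi$, and squeeze $\operatorname{rank}(C)$ between $\operatorname{rank}(Y)=m_l$ and $m_l$. This is a genuine, slightly more elementary alternative: it uses only linear algebra and does not invoke the Gorenstein (Poincar\'e duality) property of the auxiliary algebra $B^{(2l)}$; what the paper's route buys in exchange is the explicit identification of $C$ as a Poincar\'e pairing matrix, which fits the Hessian criterion framework used elsewhere in the paper. Your closing remark that the specialization exhibits $L=\sum_e X_e$ as a Lefschetz element in degree $l$ is consistent with Corollary \ref{main1}, though to apply Watanabe's criterion one must also know that the $X_{F_i}$ form a basis of $A_l$ (which follows from the nonvanishing of the determinant you have just established, so no harm is done).
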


\begin{proof} 
For $F\in \F_j,$ define $c(F,i):=\# \{ F'\in \F_i\; | \; F\cup F' \in \F_{i+j} \} .$ 
Then 
the equality $c(F_1,i)=c(F_2,i)$ holds for any $F_1,F_2\in \F_j$ and 
for $j=1,\ldots,r(E)-1.$ 
It is easy to see that 
\[ \det \left( \partial^{F_i}\partial^{F_j} \Phi_M \right)_{i,j=1}^{m_l}\Big|_{x=1} = 
\gamma \cdot \det \left( \delta_{\sigma(F_i), \sigma(F_j)}\right)_{i,j}, \] 
where $\gamma =c(F,l)^{m_l}\not=0$ for any $F\in \F_l,$ and $\delta_{\tau_1,\tau_2},$ $\tau_1,\tau_2\in \Omega(l),$ 
is defined by 
\[ \delta_{\tau_1,\tau_2}:=\left\{ 
\begin{array}{cc} 
1, & \textrm{if $\tau_1 \cap \tau_2 = \emptyset,$} \\ 
0, & \textrm{otherwise.} 
\end{array}
\right. \] 
At the same time, we have 
\[ \det \left( \partial^{F_i}\partial^{F_j} \Phi_M^{(2l)} \right)_{i,j} = 
\det \left( \delta_{\sigma(F_i), \sigma(F_j)}\right)_{i,j}. \] 
Note that the algebra $B^{(2l)}:=Q/\Ann \Phi_M^{(2l)}$ is also Gorenstein, 
and the natural pairings 
\[ \langle \; , \; \rangle :B_i^{(2l)} \times B_{2l-i}^{(2l)} \rightarrow 
B^{(2l)}_{2l} \cong k \] 
are non-degenerate for $i=0,\ldots,l.$ From Lemma \ref{basis}, we see that 
$\{ x_{F_i}| i=1,\ldots,m_l \}$ gives a basis of $B_l^{(2l)}.$ 
Since the matrix 
$\left( \partial^{F_i}\partial^{F_j} \Phi_M^{(2l)} \right)_{i,j}$ represents 
the pairing $\langle \; , \; \rangle$ at the intermediate 
part $B_l^{(2l)}\times B_l^{(2l)}\rightarrow k,$ we see that its determinant is non-zero. 
Therefore, $\det \left( \partial^{F_i}\partial^{F_j} \Phi_M \right)\Big|_{x=1}$ 
is non-zero, and hence it cannot be identically zero. 
\end{proof}

\begin{cor} \label{main1}
$(1)$ The algebra $A_{M(q,n)}$ has the strong Lefschetz property. \\ 
$(2)$ The ideal $\Ann \Phi_{M(q,n)}$ is generated by $\Lambda_{M(q,n)},$ i.e., 
$\Ann \Phi_{M(q,n)}=J_{M(q,n)}.$ In particular, 
it is a binomial ideal. \\ 
$(3)$ We have 
\[ \Hilb(Q/\Ann \Phi_{M(q,n)},t)= \sum_{i=0}^n t^i {n \choose i}_q , \] 
where ${n \choose i}_q,$ $0\leq i \leq n,$ are $q$-binomial coefficients. \\ 
$(4)$ The vector space lattice $V(q,n)$ consisting of the linear subspaces of 
$\FF_q^n$ has the Sperner property. 
\end{cor}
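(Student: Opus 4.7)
The plan is to derive all four claims from Theorem \ref{main}, Corollary \ref{hilb}, and Watanabe's Hessian criterion.

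The first step is to identify a concrete $k$-basis of $A_l$ for $l \le [n/2]$. Since $J_M \subset \Ann \Phi_M$, Corollary \ref{hilb} gives $\dim_k A_l \le m_l$. Conversely, any relation $\sum_i c_i x_{F_i} = 0$ in $A_{M(q,n)}$ amounts to $\sum_i c_i \partial^{F_i}\Phi_M = 0$ as a polynomial; applying $\partial^{F_j}$ to each side shows that $c = (c_i)$ lies in the left kernel of the matrix $(\partial^{F_i}\partial^{F_j}\Phi_M)_{i,j}$ evaluated at every point. Theorem \ref{main} exhibits a point of non-degeneracy, so $c = 0$. Thus $\{x_{F_i}\}_{i=1}^{m_l}$ is a basis and $\dim A_l = m_l$.

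Part (1) then follows at once from Watanabe's criterion applied with ${\bf B}_l := \{x_{F_i}\}$, since Theorem \ref{main} gives the non-identical-vanishing of each Hessian $\Hess^{(l)}_{{\bf B}_l}\Phi_M$ and hence a common non-vanishing point for $\Phi_M$ and all the Hessians. Parts (2) and (3) are then bookkeeping: the Hilbert functions of $Q/J_M$ and $A_{M(q,n)}$ agree on $l \le [n/2]$ by the previous step and extend by Poincar\'e duality (Proposition \ref{Poincare}) to all degrees, so $J_{M(q,n)} = \Ann \Phi_{M(q,n)}$, which is binomial by inspection of $\Lambda_{M(q,n)}$; and $m_l$ equals the number of $l$-dimensional subspaces of $\FF_q^n$, namely $\binom{n}{l}_q$.

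For (4), identify the $i$-th level $P_i$ of $V(q,n)$ with $\Omega(i)$ and with the basis $\{x_\tau\}$ of $A_i$ accordingly. Strong Lefschetz together with the factorization $L^{n-2i} = L^{n-2i-1}\cdot L$ implies that $\times L : A_i \to A_{i+1}$ is injective for $i < n/2$, and dually surjective for $i \ge n/2$; since $\binom{n}{i}_q$ is unimodal and symmetric, this is exactly the full-rank condition. A direct computation yields $L \cdot x_F = \sum_{e\,:\, F \cup \{e\} \in \F_{i+1}} a_e\, x_{F \cup \{e\}}$ for a representative $F \in \tau$, so the matrix entry from $\tau$ to $\tau'$ vanishes unless $\tau \subset \tau'$, i.e.\ $\tau < \tau'$ in $V(q,n)$. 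The order-respecting/full-rank Sperner criterion recalled in the introduction then concludes.

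The conceptual content is absorbed entirely into Theorem \ref{main}; the only non-routine step afterward is the simultaneous identification of $\{x_{F_i}\}$ as a basis of $A_l$ and as the correct family for applying Watanabe's Hessian test, which is what ties the Hessian non-vanishing of Theorem \ref{main} to both the Lefschetz property of $A_{M(q,n)}$ and the dimension count needed for (2)--(4).
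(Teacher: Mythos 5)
Your proposal is correct and follows the derivation the paper intends: the corollary is stated without an explicit proof, as an immediate consequence of Theorem \ref{main} combined with Watanabe's Hessian criterion, Corollary \ref{hilb}, Poincar\'e duality, and the full-rank/order-compatible-matrix Sperner criterion recalled in the introduction. The one step the paper leaves implicit that you rightly make explicit is that the non-vanishing of the determinant in Theorem \ref{main} simultaneously certifies that the classes $x_{F_1},\ldots,x_{F_{m_l}}$ form a basis of $(A_{M(q,n)})_l$ and that the $l$-th Hessian with respect to that basis is not identically zero, which is exactly what ties everything together.
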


\begin{rem}
For $i\leq n,$ let $M^{(i)}(q,n)$ be a matroid structure on $\PP^{n-1}(\FF_q)$ 
obtained by regarding $\F_i$ as a system of bases. We see that 
$\Phi_{M^{(i)}(q,n)}=\Phi_{M(q,n)}^{(i)}.$ It can be shown by a similar manner 
as the proof of Theorem \ref{main} that 
$Q/\Ann \Phi_{M^{(i)}(q,n)}$ has the Lefschetz property, and 
$\Ann \Phi_{M^{(i)}(q,n)}=J_{M^{(i)}(q,n)}.$ 
\end{rem}

\begin{ex} \label{boolean} Let $[n]:=\{1,2,\ldots,n \}$ be an $n$-element set. 
The set $2^{[n]}$ of the subsets of $[n]$ has a natural lattice structure 
induced by the operations $\cup$ and $\cap.$ The obtained lattice is called 
the Boolean lattice. 
Sperner's theory originates his work \cite{Sp} on the maximal cardinality of 
the antichains of the Boolean lattice. On the other hand, $M([n]):=([n],2^{[n]})$ 
satisfies the axioms of the matroid. 
The matroid $M([n])$ has the unique basis $[n],$ so the corresponding 
Gorenstein algebra is given by 
\[ A_{M([n])}=k[X_1,\ldots,X_n]/\Ann (x_1\cdots x_n) . \] 
In \cite{HW}, it has been proved that 
$M([n])$ is another example of matroids for which Theorem \ref{main} 
holds. As a consequence, we obtain $\Ann \Phi_{M([n])}=J_{M([n])}$ 
and the Lefschetz property for $A_{M([n])},$ which gives another 
proof of the Sperner property for the Boolean lattice. 
\end{ex}

\begin{conj} 
The algebra $A_M$ has the strong Lefschetz property for an arbitrary matroid 
$M.$ 
\end{conj}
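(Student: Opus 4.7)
The plan is to reduce the conjecture to an explicit non-vanishing statement via Watanabe's Hessian criterion and then to tackle this by a combination of direct combinatorial analysis and induction on matroid structure. Since $A_M = Q/\Ann \Phi_M$ is automatically Gorenstein by Proposition \ref{Gor}, SLP at a point $L = a_1 X_1 + \cdots + a_n X_n$ is equivalent to $\Phi_M(a) \neq 0$ together with $(\Hess^{(d)}_{\B_d} \Phi_M)(a) \neq 0$ for $d = 1, \ldots, [D/2]$, so it suffices to exhibit, for each $d$, a basis $\B_d$ of $A_d$ for which the Hessian polynomial is not identically zero. Paralleling the treatment of $M(q,n)$ in Theorem \ref{main}, I would take $\B_d$ to be a maximal linearly independent subfamily of $\{x_F : F \in \F_d\}$ in $A_d$; unlike the $M(q,n)$ case, this may be a proper subfamily of the rank-$d$ flat representatives, since Example \ref{non-ssc} shows that in general $\Ann \Phi_M$ strictly contains $J_M$.

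With this basis, each entry $\partial^{F_i}\partial^{F_j}\Phi_M$ is either zero (if $F_i \cap F_j \neq \emptyset$ or $F_i \cup F_j \notin \F$) or a sum of squarefree monomials indexed by bases $B \supseteq F_i \cup F_j$. The first attempt is to evaluate at the all-ones point, reducing the problem to showing that the integer matrix with entries $c_{ij} = \#\{B \in \B : F_i \cup F_j \subseteq B\}$ is non-singular. The diagonal $c_{ii}$ is strictly positive, and one would try to control the off-diagonal terms by iterated use of the basis exchange axiom. If the all-ones evaluation is insufficient, one would refine it by substituting $x_e \mapsto t^{w_e}$ for generic integer weights $w_e$ and identifying the initial form of the Hessian determinant; the universal Gr\"obner basis $\Lambda_M$ established in Section \ref{main} should provide a concrete handle on this initial form.

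A complementary line of attack is inductive on the structure of $M$. For direct sums one has $\Phi_{M_1 \oplus M_2}(x,y) = \Phi_{M_1}(x)\Phi_{M_2}(y)$, hence $A_{M_1 \oplus M_2} \cong A_{M_1} \otimes_k A_{M_2}$ (compatibly with Corollary \ref{prodGor}); a standard argument in characteristic zero propagates SLP across tensor products, reducing the conjecture to connected matroids. For a connected $M$ and a non-loop, non-coloop element $e$, one would hope to relate $A_M$ to $A_{M \setminus e}$ and $A_{M/e}$ through a deformation or a short exact sequence at the level of inverse systems, and then to transfer SLP by semicontinuity of the rank of the multiplication-by-$L^{D-2i}$ maps.

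The main obstacle is that the proof for $M(q,n)$ crucially exploits the transitive action of $GL_n(\FF_q)$ on rank-$d$ flats, which forces the Hessian matrix to be block upper-triangular with explicitly positive diagonal. No analogous symmetry exists for an arbitrary matroid, and verifying non-singularity of $(c_{ij})$ at the all-ones point — let alone non-vanishing of the full Hessian polynomial — appears to be a genuinely hard combinatorial statement of the same flavour as the Sperner-type consequences it would imply. A deeper approach may be required: constructing a natural comparison between $A_M$ and the Chow ring of $M$ in the sense of Feichtner–Yuzvinsky (for which hard Lefschetz is now available via Adiprasito–Huh–Katz) and transferring SLP through such a map seems to me the most promising avenue, but would demand substantial input beyond the techniques of the present paper.
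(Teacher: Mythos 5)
This statement appears in the paper as a \emph{Conjecture} and is given no proof there; it is open. What you have written is likewise not a proof but a research plan, and you acknowledge as much. The reduction via Watanabe's criterion to the non-vanishing of the higher Hessians $\Hess^{(d)}_{{\bf B}_d}\Phi_M$ is correct and is the natural starting point (it is how the paper handles $M(q,n)$), and your observation that ${\bf B}_d$ must in general be a proper subfamily of the flat representatives --- because of examples like Example \ref{non-ssc}, where $\Ann \Phi_M \supsetneq J_M$ --- is accurate. But every subsequent step is left unresolved. The non-singularity of the all-ones specialization $(c_{ij})$ with $c_{ij}=\#\{B\in\B : F_i\cup F_j\subseteq B\}$ is precisely the hard combinatorial content of the conjecture, and ``iterated use of the basis exchange axiom'' is not an argument: in the $M(q,n)$ case the paper obtains triangularity of the analogous matrix only from the transitivity of $GL_n(\FF_q)$ on flats of each rank, a symmetry you yourself note is absent for a general matroid. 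The weight-deformation fallback inherits the same difficulty, since identifying the initial form of the determinant of a matrix of polynomials requires exactly the cancellation control you have not supplied.

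The inductive paragraph has a specific logical flaw beyond incompleteness: semicontinuity of the rank of $\times L^{D-2i}$ transfers the strong Lefschetz property from a special member of a flat family to a generic one, so to invoke it you must first exhibit a degeneration of $A_M$ to an algebra already known to have the property; relating $A_M$ to $A_{M\setminus e}$ and $A_{M/e}$ ``through a deformation or a short exact sequence'' is a hope, not such an exhibition, and no family interpolating these algebras is constructed or even described. (The direct-sum reduction to connected matroids via $\Phi_{M_1\oplus M_2}=\Phi_{M_1}\Phi_{M_2}$ and Corollary \ref{prodGor} is fine and standard.) The proposed comparison with the Chow ring of $M$ is only named, not built: no map between $A_M$ and the Feichtner--Yuzvinsky ring is offered, and the two algebras have different Hilbert functions in general, so any such comparison would itself be a substantial theorem. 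In short, nothing here closes the gap between the cases established in the paper (modular geometric lattices) and an arbitrary matroid, which is exactly where the conjecture lives.
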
 

\section{Modular geometric lattice} 
In this section, we discuss a characterization of the matroids for which 
the algebra $Q/J_M$ is Gorenstein. 

\begin{dfn} Let $L$ be a finite graded lattice with the rank function $r.$ \\ 
$(1)$ The lattice $L$ is called {\em (upper) semimodular} if 
$r(x) + r(y) \geq r(x \wedge y) + r (x \vee y)$ 
for all $x,y\in L.$ If the equality holds for all $x,y\in L,$ then $L$ is 
called {\em modular}. \\ 
$(2)$ Assume that $L$ has the unique minimal element $\hat{0}.$ An element of $L$ is 
called an {\em atom} if it covers $\hat{0}.$ The term {\em coatom} is dually defined as 
an element covered by the unique maximal element $\hat{1}.$ 
The lattice $L$ is {\em atomic} if 
every element of $L$ is written as a join of atoms. \\ 
$(3)$ The lattice $L$ is said to be {\em geometric} if $L$ is atomic and semimodular. 
\end{dfn}

The set of the flats of a matroid forms a lattice, which we denote by $L(M).$ 
It is known that a finite lattice $L$ is geometric if and only if 
$L \cong L(M)$ for a matroid $M$ (see \cite[Theorem 3.8]{S2}). 

\begin{prop} \label{Greene} {\rm (Greene \cite{Gr})} 
Let $L$ be a finite geometric lattice. The sets of atoms and of coatoms 
have the same cardinality if and only if $L$ is modular. 
\end{prop}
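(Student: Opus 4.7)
My plan is to prove the two directions separately, with the reverse direction carrying the substantive content.

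For the forward direction (modular implies equinumerous atoms and coatoms), I would establish the stronger fact that in a modular geometric lattice of rank $r$, the Whitney numbers $W_k := \#\{x \in L : r(x) = k\}$ satisfy the symmetry $W_k = W_{r-k}$; specializing to $k = 1$ yields $|A| = |C|$, where $A$ and $C$ denote the sets of atoms and coatoms. The cleanest route is via the Birkhoff structure theorem: every modular geometric lattice is a direct product of rank-$1$ lattices and projective geometries $PG(n-1, D)$ over division rings. Each such projective geometry is self-dual via the annihilator map under a non-degenerate bilinear pairing on the underlying vector space, giving $W_k = W_{r-k}$ factor-wise, a property that lifts to the direct product.

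For the reverse direction I would argue contrapositively: non-modularity of $L$ forces $|A| < |C|$. Non-modularity yields a pair $(x,y)$ with $r(x) + r(y) > r(x \vee y) + r(x \wedge y)$; restricting first to the interval $[x \wedge y, x \vee y]$ and then to a minimal rank-$3$ subinterval witnessing the defect, one obtains a rank-$3$ geometric sublattice $L'$ of $L$ in which two distinct coatoms meet only at $\hat 0$. By the de Bruijn--Erd\H{o}s theorem, any rank-$3$ geometric lattice satisfies $|A| \leq |C|$, with equality exactly for projective planes and near-pencils — both of which are modular. Hence $|A(L')| < |C(L')|$.

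The principal obstacle is lifting this local strict inequality to the global statement $|A(L)| < |C(L)|$. My plan is to construct an explicit injection $\iota : A(L) \hookrightarrow C(L)$ together with a set of coatoms of $L$ that $\iota$ provably misses. Concretely, fix a rank-$(r-3)$ flat $F$ of $L$ complementary to $L'$ in the sense that $F \vee \hat 1_{L'} = \hat 1_L$ and $F \wedge \hat 1_{L'} = \hat 0_{L'}$; for each atom $a \in A(L)$, set $\iota(a)$ to be the join of $a$ with a chosen rank-$(r-2)$ flat above $F$, with the choice made so that $\iota$ restricts on atoms of $L'$ to the natural injection $A(L') \hookrightarrow C(L')$ provided by de Bruijn--Erd\H{o}s. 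Semimodularity ensures each $\iota(a)$ is a coatom and that $\iota$ is injective, while the counting defect in $L'$ supplies coatoms of $L$ — those of the form ``extra line of $L'$ joined with $F$'' — that are not in the image. Verifying that this missed set has cardinality at least $|C(L')| - |A(L')|$, without double-counting as $F$ varies, is the main technical hurdle and probably requires an inclusion-exclusion or Möbius-inversion argument over the poset of rank-$3$ non-modular intervals.
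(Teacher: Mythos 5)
The paper does not prove this proposition at all: it is quoted as a black box from Greene's paper \cite{Gr}, so there is no internal proof to compare against, and your attempt has to stand on its own. Your forward direction is essentially sound, with one small inaccuracy: the decomposition theorem for modular geometric lattices produces not only rank-one factors and projective geometries $PG(n-1,D)$ over division rings but also incidence lattices of non-Desarguesian projective planes (the paper itself records this in Section 5). The self-duality-via-annihilator argument does not apply to those factors, but the conclusion $|A|=|C|$ still holds for any finite projective plane by the elementary point--line count $\nu^2+\nu+1$, so this is patchable. The reduction of the converse to rank~$3$ and the appeal to de Bruijn--Erd\H{o}s is also a reasonable local step (and you are right that near-pencils are modular, being products of a rank-one and a rank-two lattice).

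The genuine gap is exactly where you flag it: the globalization. The map $\iota$ you describe is not well defined as stated. If $G$ is a rank-$(r-2)$ flat above $F$ and $a\leq G$, then $a\vee G=G$ is not a coatom, so the flat must be chosen depending on $a$; once the choice varies with $a$, injectivity is no longer supplied by semimodularity, since a single coatom above $G$ contains many atoms outside $G$ and can arise as $\iota(a)$ for several $a$. You also have no argument that the ``extra lines of $L'$ joined with $F$'' are coatoms of $L$, are pairwise distinct, and avoid the image of $\iota$; the proposed M\"obius-inversion repair over all non-modular rank-$3$ intervals is a hope, not a proof, and double-counting there is a real obstruction, not a formality. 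The standard route (Greene's, building on Dilworth) avoids localization entirely: one proves the inequality $W_1\leq W_{r-1}$ for \emph{every} finite geometric lattice by verifying Hall's condition for the bipartite non-incidence relation between atoms and coatoms (for every set $S$ of atoms, the coatoms missing some atom of $S$ number at least $|S|$), and then analyzes when equality can hold, which forces every line to meet every coatom and hence modularity. As written, your converse direction is an outline with its central step missing, so the proof is incomplete.
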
 

Greene's characterization of the modular geometric lattice implies the following. 
\begin{prop} \label{gor-mgl}
If $Q/J_M$ is Gorenstein, then $L(M)$ is a modular geometric lattice. 
\end{prop}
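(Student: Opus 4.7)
The plan is to combine Greene's characterization of modularity (Proposition \ref{Greene}) with Poincar\'e duality for $Q/J_M$. Since the lattice of flats $L(M)$ of a matroid is automatically geometric, the only thing left to verify is modularity. By Proposition \ref{Greene}, this reduces to the equality between the number of atoms and the number of coatoms of $L(M)$, that is, to showing $\#\Omega(1)=\#\Omega(r(E)-1)$.

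Next I would express these two counts as values of the Hilbert function of $Q/J_M$. For this I would check the (straightforward) identification $\bar{\F}_i \leftrightarrow \Omega(i)$: every $F \in \F_i$ has rank $i$, hence $\sigma(F) \in \Omega(i)$, and conversely any rank-$i$ flat $G$ contains, by axioms (M2)-(M3), an independent subset of size $i$ whose closure is $G$. Together with Corollary \ref{hilb} this gives
\[ \dim_k (Q/J_M)_i \;=\; \#\bar{\F}_i \;=\; \#\Omega(i), \]
and in particular the socle degree of $Q/J_M$ is $D = r(E)$.

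The last step is to invoke the hypothesis. If $Q/J_M$ is Gorenstein, then by Proposition \ref{Poincare} it is a Poincar\'e duality algebra, so its Hilbert function is symmetric; in particular $\dim_k(Q/J_M)_1 = \dim_k(Q/J_M)_{D-1}$. Combined with the previous step this yields $\#\Omega(1)=\#\Omega(r(E)-1)$, and Proposition \ref{Greene} concludes that $L(M)$ is modular.

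I do not anticipate a genuine obstacle here: Greene's theorem and Poincar\'e duality do the real work, and the one point requiring care is the bookkeeping identification of $\bar{\F}_i$ with $\Omega(i)$, which is immediate from the matroid axioms. The substantive content sits entirely in the two cited propositions that have already been set up in the paper.
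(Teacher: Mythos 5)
Your proposal is correct and follows essentially the same route as the paper: Gorenstein implies Poincar\'e duality (Proposition \ref{Poincare}), hence $\dim(Q/J_M)_1=\dim(Q/J_M)_{r(E)-1}$, which equates the numbers of atoms and coatoms of $L(M)$, and Greene's theorem (Proposition \ref{Greene}) finishes. The only difference is that you make explicit the identification $\bar{\F}_i\leftrightarrow\Omega(i)$ underlying Corollary \ref{hilb}, which the paper leaves implicit.
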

\begin{proof} 
Let $n$ be the dimension of $M.$ Then the socle degree of $Q/J_M$ is $n.$ 
Suppose that $Q/J_M$ is Gorenstein. From Proposition \ref{Poincare}, 
the part $(Q/J_M)_1$ of degree 1 is isomorphic to $(Q/J_M)_{n-1}$ of degree 
$n-1$ as vector spaces. 
Since 
\[ \# \{ \textrm{atoms of $L(M)$} \} = \dim (Q/J_M)_1 = \dim (Q/J_M)_{n-1}= 
\# \{ \textrm{coatoms of $L(M)$} \}, \] 
we can conclude that the lattice $L(M)$ 
is a modular geometric lattice by Proposition \ref{Greene}. 
\end{proof}

The fundamental theorem of projective geometry shows that 
a modular geometric lattice decomposes into a direct product of boolean lattices, 
vector space lattices and incidence lattices of 
(non-Desarguesian) finite projective planes (see e.g. \cite{S2}). 

\begin{prop} \label{finproj}
Let $M(\Pi)$ be the matroid associated to a finite projective plane $\Pi.$ 
Then we have $J_{M(\Pi)}=\Ann \Phi_{M(\Pi)}.$ 
\end{prop}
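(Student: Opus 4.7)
My plan is to reduce the claim to a Hilbert-series comparison. Since the containment $J_{M(\Pi)} \subseteq \Ann \Phi_{M(\Pi)}$ is already known, if I can show that both quotients have the same Hilbert series, then the ideals must coincide.

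First I would unwind the flat structure of the rank-$3$ matroid $M(\Pi)$: its flats are precisely the empty set, the singleton points, the lines of $\Pi$, and the whole point set. If $N$ denotes the number of points, then $\#\bar{\F}_0 = 1$, $\#\bar{\F}_1 = N$, $\#\bar{\F}_3 = 1$, and $\#\bar{\F}_2$ equals the number of lines. Since $L(M(\Pi))$ is modular and geometric, Proposition~\ref{Greene} forces $\#\bar{\F}_2 = N$ as well, and Corollary~\ref{hilb} then produces
\[ \Hilb(Q/J_{M(\Pi)}, t) = 1 + Nt + Nt^2 + t^3. \]

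Next I would bound $\Hilb(Q/\Ann \Phi_{M(\Pi)}, t)$ from the other side. Because $\Phi_{M(\Pi)}$ is homogeneous of degree $3$, Proposition~\ref{Gor} ensures that $A := Q/\Ann \Phi_{M(\Pi)}$ is Gorenstein with socle degree $3$, so by Proposition~\ref{Poincare} its Hilbert function is symmetric of the form $(1, h, h, 1)$. The inclusion $J_{M(\Pi)} \subseteq \Ann \Phi_{M(\Pi)}$ gives $h \leq N$, so the entire argument reduces to the converse inequality $h \geq N$, i.e., to showing that no nonzero linear form $L = \sum_e a_e X_e$ annihilates $\Phi_{M(\Pi)}$.

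For this step I would compute, for each two-element subset $C \subset E$, the coefficient of $x_C$ in $L\Phi_{M(\Pi)}$. Since any two distinct points of $\Pi$ are independent and the matroid has rank $3$, this coefficient equals $\sum_{e \notin \sigma(C)} a_e$, where $\sigma(C)$ is the line through $C$. Setting $T := \sum_e a_e$, the vanishing conditions rephrase as $\sum_{e \in \ell} a_e = T$ for every line $\ell$. Summing these identities over the lines through a fixed point $p$, and using that any two distinct points of $\Pi$ lie on a unique common line, a short double-counting argument forces $a_p = T$ for every $p$. Plugging back, $T = NT$, so $(N-1)T = 0$; since $\mathrm{char}\, k = 0$ and $N > 1$, we conclude $T = 0$ and hence $L = 0$. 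Once $h = N$ is in hand, the surjection $Q/J_{M(\Pi)} \twoheadrightarrow A$ must be an isomorphism. The delicate point is precisely this last step, which is the only place where the combinatorial geometry of the projective plane (two points determining a unique line, together with constant line length) is essentially used and where the characteristic-zero hypothesis enters; everything else is formal from Section~1 and the rank computation above.
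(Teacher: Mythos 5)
Your proof is correct and follows essentially the same route as the paper: both use the surjection $Q/J_{M(\Pi)} \twoheadrightarrow A_{M(\Pi)}$, the Hilbert function $(1,N,N,1)$ of $Q/J_{M(\Pi)}$, and the Gorenstein symmetry of $A_{M(\Pi)}$ to reduce everything to showing that no nonzero linear differential form annihilates $\Phi_{M(\Pi)}$. The only (minor) divergence is in that last step, where the paper specializes the matrix $\left( \partial^p\partial^q\Phi_{M(\Pi)}\right)_{p,q}$ at $x=1$ to a visibly nonsingular matrix with zero diagonal and constant off-diagonal entries $\nu^2$, while you solve the equivalent linear system $\sum_{e\in\ell}a_e=T$ directly by double counting over the lines through a fixed point; both verifications are valid.
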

\begin{proof} 
Let $\Pi$ be a projective plane of order $\nu.$ 
Since $J_{M(\Pi)} \subset \Ann \Phi_{M(\Pi)},$ we have a surjective 
homomorphism $\varphi : Q/J_{M(\Pi)} \rightarrow A_{M(\Pi)}.$ From Corollary \ref{hilb}, 
we have $\dim(Q/J_{M(\Pi)})_1 = \dim (Q/J_{M(\Pi)})_2= \nu^2+\nu+1.$ 
Hence, in order to show that $\varphi$ is an isomorphism, it is enough to 
see $\dim(Q/J_{M(\Pi)})_1= \dim (A_{M(\Pi)})_1.$ 
For two distinct points $p,q\in \Pi,$ denote by $L_{pq}$ 
the line passing through $p$ and $q.$ 
We have 
\[ \partial^p \partial ^q \Phi_{M(\Pi)} = \sum_{r \not\in L_{pq}} x_r, \] 
for $p\not=q.$ Consider the specialization $S$ of the matrix 
$(\partial^p \partial ^q \Phi_{M(\Pi)})_{p,q \in \Pi}$ at $x_a=1$ for all 
$a\in \Pi.$ Then we have 
\[ S_{pq}= \left\{ \begin{array}{cc} 
0, & \textrm{if $p=q,$} \\ 
\nu^2 , & \textrm{if $p\not=q,$} 
\end{array} \right. \] 
and $\det S \not=0.$ So the polynomials $\partial^p \Phi_{M(\Pi)},$ 
$p\in \Pi,$ are linearly independent. This shows 
$\dim(Q/J_{M(\Pi)})_1= \dim (A_{M(\Pi)})_1.$ 
\end{proof}

\begin{cor} 
The algebra $A_{M(\Pi)}$ has the strong Lefschetz property. 
\end{cor}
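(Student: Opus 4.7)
The plan is to invoke Watanabe's Hessian criterion (the proposition after Example~1.5) for the algebra $A_{M(\Pi)}$. By Proposition~\ref{finproj} we have $A_{M(\Pi)} = Q/\Ann \Phi_{M(\Pi)} = Q/J_{M(\Pi)}$. The flats of $M(\Pi)$ of rank $0,1,2,3$ are $\emptyset$, the singletons of points, the lines, and $\Pi$ itself, with counts $1,\ \nu^2+\nu+1,\ \nu^2+\nu+1,\ 1$, where $\nu$ is the order of $\Pi$. Corollary~\ref{hilb} therefore gives the Hilbert series
\[
\Hilb(A_{M(\Pi)},t) = 1 + (\nu^2+\nu+1)t + (\nu^2+\nu+1)t^2 + t^3.
\]
The socle degree is $D=3$, so $[D/2]=1$, and Watanabe's criterion reduces to finding a point $a = (a_p)_{p\in\Pi}$ at which both $\Phi_{M(\Pi)}(a)$ and the first Hessian $\Hess^{(1)} \Phi_{M(\Pi)}(a)$ are nonzero. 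Here the Hessian is taken with respect to the basis $\{X_p\}_{p\in\Pi}$ of $(A_{M(\Pi)})_1$; this is indeed a basis because the $\nu^2+\nu+1$ rank-one flats are the singletons and $\dim(A_{M(\Pi)})_1 = \nu^2+\nu+1$.

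I would test the point $a = (1,\ldots,1)$. Since $\Phi_{M(\Pi)}(1,\ldots,1) = \#\B(M(\Pi)) > 0$, the first condition holds. For the Hessian, the matrix $(\partial^p\partial^q \Phi_{M(\Pi)})_{p,q\in\Pi}$ specialized at $x_a = 1$ for all $a$ is exactly the matrix $S$ computed in the proof of Proposition~\ref{finproj}: $S_{pp} = 0$ and $S_{pq} = \nu^2$ for $p\neq q$. Writing $S = \nu^2 (J - I)$ with $J$ the all-ones matrix gives
\[
\det S = \nu^{2(\nu^2+\nu+1)}(\nu^2+\nu)(-1)^{\nu^2+\nu} \neq 0,
\]
so $(\Hess^{(1)} \Phi_{M(\Pi)})(1,\ldots,1) \neq 0$, and Watanabe's criterion identifies $L = \sum_{p\in\Pi} X_p$ as a strong Lefschetz element.

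Since the bulk of the work (the identification $J_{M(\Pi)} = \Ann \Phi_{M(\Pi)}$ and the specialization of the degree-two pairing matrix) has already been done in Proposition~\ref{finproj}, the corollary is essentially a repackaging of that computation through the Hessian criterion, and I do not anticipate any genuine obstacle. The only subtlety worth flagging is dimensional: one must observe that the socle degree is $3$ rather than $2$ or $4$, so that exactly one Hessian condition $d = 1$ must be checked and that the already-computed specialization $S$ controls it.
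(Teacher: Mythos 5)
Your proof is correct and is essentially the argument the paper intends: the corollary follows from Watanabe's Hessian criterion, since the socle degree is $3$ so only the $d=1$ Hessian must be checked, and its value at $(1,\ldots,1)$ is exactly the determinant of the matrix $S=\nu^2(J-I)$ already computed in the proof of Proposition~\ref{finproj}. The paper states the corollary without proof, but your specialization argument, including the correct count $\nu^2$ of points off a line and the eigenvalue computation for $J-I$, is precisely the intended repackaging.
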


The following lemma is easy. 
\begin{lem} \label{prod}
If $M$ is the direct sum of two matroids $M_1$ and $M_2,$ then 
$Q_M/J_M \cong Q_{M_1}/J_{M_1} \otimes Q_{M_2}/J_{M_2}.$ 
\end{lem}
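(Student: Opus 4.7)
The plan is to identify both sides explicitly using the natural decomposition of the ground set. Write $E = E_1 \sqcup E_2$ for the ground set of $M = M_1 \oplus M_2$, so that $Q_M = k[\partial/\partial x_e \mid e \in E] = Q_{M_1} \otimes_k Q_{M_2}$. The goal is to show that $J_M$ corresponds, under this identification, to the ideal $J_{M_1} \otimes Q_{M_2} + Q_{M_1} \otimes J_{M_2}$, since the quotient by that ideal is by definition $(Q_{M_1}/J_{M_1}) \otimes (Q_{M_2}/J_{M_2})$.

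First I would record the three combinatorial facts that make everything go. For subsets $S_i \subset E_i$ with $S = S_1 \sqcup S_2$: (a) $S$ is independent in $M$ if and only if each $S_i$ is independent in $M_i$; (b) the rank function decomposes as $r(S) = r_1(S_1) + r_2(S_2)$; and consequently (c) the closure decomposes as $\sigma(S) = \sigma_1(S_1) \sqcup \sigma_2(S_2)$, so that $A_1 \sqcup A_2 \sim A_1' \sqcup A_2'$ in $M$ if and only if $A_1 \sim A_1'$ in $M_1$ and $A_2 \sim A_2'$ in $M_2$. These all follow directly from the definition of the direct sum.

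Next I would check that the generating set $\Lambda_M$ translates into the sum $\Lambda_{M_1} \cup \Lambda_{M_2}$ under the tensor identification. The square generators $x_e^2$ for $e \in E_i$ are immediately the generators of $\Lambda_{M_i}$. The monomial generators $x_S$ with $S \notin \F$ either have $S_1 \notin \F_1$, in which case $x_S = x_{S_1} \cdot x_{S_2}$ lies in $J_{M_1} \otimes Q_{M_2}$, or analogously for $S_2$. For the binomial generators, given $A \sim A'$ in $M$ with $A = A_1 \sqcup A_2$, $A' = A_1' \sqcup A_2'$, fact (c) gives $A_i \sim A_i'$ in $M_i$, and I would use the telescoping identity
\[
x_{A_1}x_{A_2} - x_{A_1'}x_{A_2'} = (x_{A_1} - x_{A_1'})\,x_{A_2} + x_{A_1'}\,(x_{A_2} - x_{A_2'})
\]
to express $x_A - x_{A'}$ as an element of $J_{M_1} \otimes Q_{M_2} + Q_{M_1} \otimes J_{M_2}$. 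The reverse inclusion is immediate, since any generator of $\Lambda_{M_1}$ tensored with $1$ already belongs to $\Lambda_M$, and similarly for $\Lambda_{M_2}$.

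There is no real obstacle here; the only point requiring any care is the binomial case, and the telescoping identity above handles it. Once the ideals are shown to match, the tensor product of quotients formula gives $Q_M/J_M \cong Q_{M_1}/J_{M_1} \otimes_k Q_{M_2}/J_{M_2}$, completing the proof.
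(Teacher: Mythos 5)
Your proof is correct, and since the paper simply declares this lemma ``easy'' and omits the argument, what you have written is precisely the intended verification: the decompositions of independence, rank, and closure across $E=E_1\sqcup E_2$ show that $\Lambda_M$ generates $J_{M_1}\otimes Q_{M_2}+Q_{M_1}\otimes J_{M_2}$ and conversely, with the telescoping identity handling the binomial generators. Nothing further is needed.
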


\begin{thm}
The algebra $Q/J_M$ is Gorenstein 
if and only if $L(M)$ is a modular geometric lattice. 
\end{thm}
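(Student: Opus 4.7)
The ``only if'' direction is exactly Proposition \ref{gor-mgl}, so the plan is to prove the ``if'' direction: when $L(M)$ is a modular geometric lattice, $Q/J_M$ is Gorenstein.

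The strategy is to reduce to the three irreducible types of modular geometric lattices and then use a tensor product decomposition. First I invoke the fundamental theorem of projective geometry (already cited in the paper) to express $L(M)$ as a direct product
\[
L(M) \;\cong\; L_1 \times L_2 \times \cdots \times L_s,
\]
where each $L_j$ is either a Boolean lattice, the subspace lattice $V(q,n)$ of some $\FF_q^n$, or the incidence lattice of a finite projective plane $\Pi$. This lattice decomposition corresponds to a direct sum decomposition of the matroid, $M = M_1 \oplus \cdots \oplus M_s$, where each $M_j$ is respectively $M([n_j])$, $M(q_j, n_j)$, or $M(\Pi_j)$; this is standard since a direct product of geometric lattices is the lattice of flats of the direct sum of the associated matroids.

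Second, I apply Lemma \ref{prod} iteratively to conclude
\[
Q_M/J_M \;\cong\; \bigotimes_{j=1}^{s} Q_{M_j}/J_{M_j}.
\]
For each irreducible factor I already know from the paper that $Q_{M_j}/J_{M_j}$ is Gorenstein and in fact coincides with the Gorenstein algebra $A_{M_j}$: the Boolean case is Example \ref{boolean}, the vector space case is Corollary \ref{main1}(2), and the projective plane case is Proposition \ref{finproj}. Finally, Corollary \ref{prodGor} implies that the tensor product of finitely many finite-dimensional graded Gorenstein $k$-algebras is Gorenstein, so $Q_M/J_M$ is Gorenstein.

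The only non-routine step is the passage from a product decomposition of the lattice $L(M)$ to a direct sum decomposition of the underlying matroid $M$. I would handle this by checking that the connected components of the matroid (in the usual matroid-theoretic sense) correspond to the indecomposable factors of $L(M)$: since a flat of $M$ is precisely a tuple of flats on each block of the ground set, the lattice of flats of $M_1 \oplus M_2$ is $L(M_1) \times L(M_2)$, and conversely an irreducible direct product decomposition of a geometric lattice partitions the atoms (the points of the matroid) into blocks that are forced to be the connected components of $M$. This is the main place one must be careful, but it is a well-known structural fact about matroids and their geometric lattices, so no delicate new argument is required.
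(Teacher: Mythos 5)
Your proposal is correct and follows essentially the same route as the paper: only-if via Proposition \ref{gor-mgl}, then decomposition of the modular geometric lattice into Boolean, vector-space, and projective-plane factors, Gorensteinness of each factor from Example \ref{boolean}, Corollary \ref{main1}(2), and Proposition \ref{finproj}, and finally Lemma \ref{prod} together with Corollary \ref{prodGor}. Your explicit justification of the passage from the lattice product decomposition to the matroid direct sum is a detail the paper leaves implicit, but it is the standard fact and does not change the argument.
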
 
\begin{proof} 
In Proposition \ref{gor-mgl}, we have proved that $L(M)$ is a modular geometric lattice 
if $Q/J_M$ is Gorenstein. 

Conversely, assume that $L(M)$ is a modular geometric lattice. 
Then $L(M)$ decomposes into a direct product of boolean lattices $2^{[n]},$ 
vector space lattices $V(q,n)=L(M(q,n))$ and incidence lattices of 
finite projective planes $\Pi.$ 
For the boolean lattice $2^{[n]},$ we have seen in Example \ref{boolean} 
that $Q/J_{M([n])}$ is Gorenstein. 
For the matroid $M(q,n),$ it has been shown 
in Corollary \ref{main1} (2) that $J_{M(q,n)}=\Ann \Phi_{M(q,n)},$ so 
$Q/J_{M(q,n)}$ is Gorenstein. 
In Proposition \ref{finproj}, we see that $Q/J_{M(\Pi)}$ is Gorenstein 
for a finite projective plane $\Pi.$ Hence, from Corollary \ref{prodGor} 
and Lemma \ref{prod}, the algebra $Q/J_M$ is Gorenstein. 
\end{proof}

\begin{cor} 
$(1)$ If $L(M)$ is a modular geometric lattice, then 
$A_M$ has the strong Lefschetz property. \\ 
$(2)$ Every modular geometric lattice has the Sperner property. 
\end{cor}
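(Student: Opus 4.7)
The strategy is to reduce to the three basic types of modular geometric lattices via the fundamental theorem of projective geometry mentioned before Proposition \ref{finproj}, and then bootstrap from the Lefschetz results already established for those pieces.

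For part (1), I would decompose $L(M)$ as a direct product of boolean lattices $2^{[n]}$, vector space lattices $V(q,n)$, and incidence lattices of finite projective planes. This corresponds to writing $M = M_1 \oplus \cdots \oplus M_k$ as a direct sum of matroids of these three types. Since the ground sets of the factors are disjoint, $\Phi_M = \prod_i \Phi_{M_i}$, and hence $A_M \cong \bigotimes_i A_{M_i}$; equivalently, one can apply Lemma \ref{prod} together with the identifications $J_{M_i} = \Ann \Phi_{M_i}$ proved in Example \ref{boolean}, Corollary \ref{main1}(2), and Proposition \ref{finproj}. Each tensor factor has the strong Lefschetz property by Example \ref{boolean}, Corollary \ref{main1}(1), and the corollary following Proposition \ref{finproj}. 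I then invoke the fact that over a field of characteristic zero the strong Lefschetz property in the narrow sense is preserved under tensor products, with a Lefschetz element of the form $L = \sum_i 1\otimes\cdots\otimes L_i\otimes\cdots\otimes 1$. This yields (1).

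For part (2), I would apply the linear-algebra recipe described in the Introduction. The poset in question is the flat lattice $L(M)$, whose rank-$i$ elements are precisely the rank-$i$ flats. By Corollary \ref{hilb}, the graded piece $(A_M)_i$ has a basis indexed by $\overline{\F}_i$, which under $F \mapsto \sigma(F)$ is in bijection with the rank-$i$ flats of $L(M)$; take $V_i$ to be this piece. With $L$ a strong Lefschetz element for $A_M$ provided by (1), the multiplication maps $\times L : V_i \to V_{i+1}$ all have maximal rank, because the bijectivity of $L^{D-2i}$ at the middle factors through each intermediate step and forces injectivity below the middle and surjectivity above. In the flat basis, the coefficient of the class of a rank-$(i+1)$ flat $\tau'$ in $L\cdot \partial^F$ equals $\sum_{e : \sigma(F\cup\{e\}) = \tau'} a_e$, which can be nonzero only when $\sigma(F) \subsetneq \tau'$, i.e., $\sigma(F) < \tau'$ in $L(M)$. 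Hence both the full-rank condition and the order condition $a_{uv}^{(i)}\neq 0 \Rightarrow u<v$ from the Introduction are satisfied, and the Sperner property follows.

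The only nontrivial input is the preservation of the strong Lefschetz property under tensor products in characteristic zero, which is classical but not explicitly recorded in the earlier sections of the paper; the subtlety is that a Lefschetz element for the tensor product is not a pure tensor, so the statement does not follow from the Gorenstein tensor product result (Corollary \ref{prodGor}) alone. Everything else in the argument is bookkeeping with the flat basis provided by Lemma \ref{equiv} and the already-proved results on the three basic lattice types.
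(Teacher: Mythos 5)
Your argument is correct and is precisely the one the paper intends: the corollary is stated there without proof, as an immediate consequence of the decomposition of a modular geometric lattice into boolean, vector-space, and projective-plane factors, Lemma \ref{prod} together with the identifications $J_{M_i}=\Ann\Phi_{M_i}$ for each factor, and the characteristic-zero preservation of the strong Lefschetz property under tensor products, which is exactly the one external input you correctly isolate (it is covered by the reference \cite{HMMNWW} cited in the Introduction). Your derivation of the Sperner property from the Lefschetz property via full-rank order-raising maps on the flat basis is likewise the standard route sketched in the Introduction, so no further comparison is needed.
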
 

\section{Gr\"obner fan of $J_M$} 
In this section, we discuss the Gr\"obner fan of the ideals $J_M$ and 
$\Ann \Phi_{M(q,n)}.$ 
The initial ideal $\ini_{\vec{\omega}}(I)$ of an ideal $I\subset Q$ with 
respect to the weight vector $\vec{\omega}\in \RR^E$ is given by 
\[ \ini_{\vec{\omega}}(I):=( \ini_{\vec{\omega}}(f) \; | \; f\in I , f\not=0). \] 
For a weight vector $\vec{\omega},$ the set 
$C(\vec{\omega}):={\rm closure}\{ \vec{\lambda} \in \RR^E \; | \; \ini_{\vec{\lambda}}(I) = 
\ini_{\vec{\omega}}(I) \}$ is a polyhedral cone in $\RR^E.$ 
The set of cones $\{ C(\vec{\omega})\; | \; \vec{\omega}\in \RR^E\setminus \{ \vec{0}\} \}$ 
forms a fan $G(I).$ The fan $G(I)$ is called the {\em Gr\"obner fan} of $I.$ 
Denote by $G^d(I)$ the set of $d$-dimensional cones in $G(I).$ 
The Gr\"obner fan $G(I)$ of a homogeneous ideal $I$ 
has the translation invariance in the direction of 
$\vec{n}:=(1,\ldots,1)\in \RR^E.$ 
Let $H$ be the 
hyperplane in $\RR^E$ defined by the equation $\sum_{e\in E}x_e=0.$ 
Denote by 
$\bar{G}(I)$ the restriction of $G(I)$ to $H.$ 

For two distinct independent sets $F,F'\in \F$ with $F \sim F',$ define a cone 
$W_{F,F'}$ by the condition 
\[ \sum_{e\in F}x_e=\sum_{e\in F'}x_e, \;\;\; 
\sum_{e\in F}x_e \leq \sum_{e\in F''} x_e \;\; 
\textrm{($\forall F'' \in \F,$ $F''\sim F$).} \] 
Let $C_1,\ldots,C_p$ be the closures of the connected components of 
\[ \RR^E \setminus \bigcup_{\substack{F,F'\in \F \\ F \sim F',F\not= F'}} W_{F,F'}. \] 

\begin{prop} \label{gfan} 
The maximal cones of $G(J_M)$ are given by $C_1,\ldots,C_p,$ i.e., 
$G^{\# E}(J_M)=\{ C_1,\ldots,C_p \}.$ 
\end{prop}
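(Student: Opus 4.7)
The plan is to leverage the preceding proposition, which establishes that $\Lambda_M$ is a universal Gr\"obner basis of $J_M$; hence $\ini_{\vec\omega}(J_M) = (\ini_{\vec\omega}(g) \mid g \in \Lambda_M)$ for every weight vector $\vec\omega \in \RR^E$. The monomials in $\Lambda_2 \cup \Lambda_3$ coincide with their own initial forms independently of $\vec\omega$, so only the binomials $x_A - x_{A'}$ with $A \sim A'$ produce variation, and the variation is governed by the comparison between $\sum_{e \in A}\omega_e$ and $\sum_{e \in A'}\omega_e$.

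The first step is to show that $\ini_{\vec\omega}(J_M)$ is constant as $\vec\omega$ varies in the interior $C_i^\circ$ of a fixed chamber. For $\vec\omega \in C_i^\circ$, no $W_{F,F'}$ contains $\vec\omega$, so in each equivalence class $\tau \in \Omega$ the minimum of $\sum_{e \in F}\omega_e$ over $F \in \tau$ is attained at a unique element $c_i(\tau)$; otherwise $\vec\omega$ would lie in some $W_{F,F'}$. Reading off the initial forms of the binomials in $\Lambda_1$ then identifies $\ini_{\vec\omega}(J_M)$ with the monomial ideal
\[
I_{c_i} \;:=\; (\Lambda_2) + (\Lambda_3) + (x_F \mid F \in \F,\ F \neq c_i([F])),
\]
which depends only on $c_i$ and is therefore constant on $C_i^\circ$.

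The technical heart of the proof is showing that distinct chambers yield distinct initial ideals. For this I would establish an exchange lemma: if $A_0 \in \F$ is the class minimum of $[A_0]$ under $\vec\omega$ and $A \subseteq A_0$, then $A$ is the class minimum of $[A]$. Setting $B_0 := A_0 \setminus A$, one first checks $B_0 \cap \sigma(A) = \emptyset$ (otherwise some $b \in B_0$ would give $r(A \cup \{b\}) = r(A)$, forcing $A \cup \{b\} \subseteq A_0$ to be dependent and contradicting $A_0 \in \F$). For any $A' \in [A]$ one has $A' \subseteq \sigma(A') = \sigma(A)$, so $A' \cap B_0 = \emptyset$; a rank computation then shows that $A' \cup B_0$ is independent with closure $\sigma(A_0)$, hence $A' \cup B_0 \in [A_0]$. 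Minimality of $A_0$ yields $\omega_{A'} + \omega_{B_0} = \omega_{A' \cup B_0} \geq \omega_{A_0} = \omega_A + \omega_{B_0}$, so $\omega_{A'} \geq \omega_A$. As a consequence, when $A_0$ is the class minimum of $[A_0]$, no generator of $I_{c_i}$ divides $x_{A_0}$: the $\Lambda_2$ generators would require $A_0$ to contain a repeated index, the $\Lambda_3$ generators would require a dependent subset of $A_0$, and a non-minimum generator $x_A$ would require $A \subseteq A_0$ with $A$ itself non-minimum, contradicting the lemma. Hence $c_i(\tau)$ is recovered from $I_{c_i}$ as the unique $F \in \tau$ with $x_F \notin I_{c_i}$, and distinct chambers $C_i \neq C_j$ produce distinct initial ideals.

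Finally, each $C_i$ is the closure of a connected component of $\RR^E$ minus a finite union of lower-dimensional polyhedral cones, so it is itself a full-dimensional polyhedral cone of dimension $\#E$. Combined with the constancy on $C_i^\circ$ and the injectivity of the assignment $C_i \mapsto I_{c_i}$, this identifies $G^{\#E}(J_M)$ with $\{C_1, \ldots, C_p\}$. The main obstacle is the exchange lemma together with its corollary that the class-minimum monomial $x_{A_0}$ is never dragged into the initial ideal by a generator supported on a smaller independent set.
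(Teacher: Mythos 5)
Your argument is correct and takes essentially the same route as the paper's (very terse) proof: both rest on $\Lambda_M$ being a universal Gr\"obner basis, so that $\ini_{\vec{\omega}}(J_M)$ is read off from the initial forms of the generators and fails to be a monomial ideal exactly on the walls $W_{F,F'}$, and your exchange lemma supplies the verification, left implicit in the paper, that the monomial $x_F$ of a class-minimal independent set never enters the initial ideal. One small step to add: your claim that distinct chambers yield distinct initial ideals needs that distinct chambers carry distinct minimizer functions $c_i$ (not just that $I_{c_i}$ determines $c_i$), which holds because the locus where a fixed choice function is realized by strict inequalities is open and convex, hence contained in a single connected component.
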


\begin{proof} 
Since $\Lambda_M$ is a universal Gr\"obner basis of $J_M,$ 
$\ini_{\vec{\omega}}(J_M)$ is not a monomial ideal if and only if 
$\ini_{\vec{\omega}}(J_M)$ contains $x_F-x_{F'}$ for two distinct 
independent sets $F,F'$ with $F \sim F'$ and does not contain 
$x_F$ or $x_{F'}.$ This is the case when $\vec{\omega} \in W_{F,F'}.$ 
\end{proof} 

The {\em tropical hypersurface} $V_{\rm trop}(\Phi_M)\subset \RR^{E}$ 
is defined as the locus in $\RR^E$ where the piecewise linear function 
\[ {\rm trop}(\Phi_M)=\max \left( \sum_{e\in B} x_e \; | \; B \in \B \right) \] 
is not smooth. The tropical hypersurface $V_{\rm trop}(\Phi_M)$ can be considered as 
a subcomplex of $G(\Phi_M)$ (see \cite{BJSST}). 
Since $\Phi_M$ is homogeneous, the corresponding tropical hypersurface 
$V_{\rm trop}(\Phi_M)$ has the translation invariance in the direction 
of the vector $\vec{n}.$ 
Denote by $\bar{V}_{\rm trop}(\Phi_M)$ 
the restriction of $V_{\rm trop}(\Phi_M)$ to $H.$ 
In our case, $\bar{V}_{\rm trop}(\Phi_M)$ is also regarded as a fan. 
The following proposition shows that the tropical variety $\bar{V}_{\rm trop}(\Phi_M)$ 
is directly obtained from the matroid polytope of $M.$ 
\begin{prop} 
The piecewise linear function ${\rm trop}(\Phi_M)|_H$ is a support function for 
the polytope $\Delta^0_M:=\Delta_M - r(E)(\# E)^{-1} \cdot \vec{n} \subset H.$ 
\end{prop}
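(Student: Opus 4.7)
The plan is to unwind both sides of the claimed identity into support-function form and observe they agree on $H$.

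First I would rewrite the tropical polynomial as an explicit maximum: by definition,
\[ {\rm trop}(\Phi_M)(\vec{\omega}) = \max_{B \in \B}\sum_{e\in B}\omega_e = \max_{B \in \B}\langle \vec{v}_B, \vec{\omega}\rangle, \]
so that on all of $\RR^E$ it coincides with the support function of the set $\{\vec{v}_B \mid B \in \B\}$, hence of its convex hull. By the definition of $\Delta_M$ as the convex hull of the incidence vectors $\vec{v}_B$ of the bases (that is, the face of $P_M$ cut out by $\sum_e x_e = r(E)$, compatibly with Proposition \ref{edmonds}), we therefore have ${\rm trop}(\Phi_M) = h_{\Delta_M}$ globally on $\RR^E$.

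Next I would use the translation invariance in the direction of $\vec{n}$. For any $\vec{\omega} \in H$ one has $\langle \vec{n}, \vec{\omega}\rangle = \sum_e \omega_e = 0$, so for any constant $c$
\[ h_{\Delta_M - c\vec{n}}(\vec{\omega}) = \max_{\vec{v} \in \Delta_M}\langle \vec{v} - c\vec{n},\vec{\omega}\rangle = h_{\Delta_M}(\vec{\omega}). \]
Applying this with $c = r(E)/\#E$ gives $h_{\Delta^0_M}(\vec{\omega}) = h_{\Delta_M}(\vec{\omega}) = {\rm trop}(\Phi_M)(\vec{\omega})$ for every $\vec{\omega} \in H$, which is the desired conclusion.

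Finally I would verify that the shift was chosen correctly so that $\Delta^0_M$ actually lies in $H$: every incidence vector $\vec{v}_B$ satisfies $\langle \vec{n},\vec{v}_B\rangle = \# B = r(E)$, so $\Delta_M$ lies in the affine hyperplane $\sum_e x_e = r(E)$, and subtracting $r(E)(\#E)^{-1}\vec{n}$ translates this into $H$. There is no serious obstacle here; the only thing to be careful about is keeping straight that $\Delta_M$ is being viewed as a face of the matroid polytope (and thus is the convex hull of basis incidence vectors only), which is exactly the polytope whose support function is ${\rm trop}(\Phi_M)$.
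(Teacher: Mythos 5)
Your proof is correct and follows essentially the same route as the paper's: both arguments reduce to the observation that ${\rm trop}(\Phi_M)=\max_{B\in\B}\langle\vec{v}_B,\cdot\rangle$ is the support function of the convex hull of the basis incidence vectors, i.e.\ of $\Delta_M$, and that the translation by $r(E)(\# E)^{-1}\vec{n}$ is invisible when pairing against vectors in $H$ since $\langle\vec{n},\vec{y}\rangle=0$ there. The paper merely phrases this by exhibiting the inequality $\langle\vec{u}_B,\vec{y}\rangle\leq{\rm trop}(\Phi_M)(\vec{y})$ together with the case of equality at $\vec{y}=\vec{u}_B$, which is the same computation.
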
 

\begin{proof} 
The polytope $\Delta^0_M$ is spanned by the vectors 
$\vec{u}_B:=\vec{v}_B-r(E)(\# E)^{-1} \cdot \vec{n},$ $B\in \B,$ 
by Proposition \ref{edmonds}. 
We also have the inequality 
\[ \langle \vec{u}_B,\vec{y} \rangle=\sum_{b\in B}y_b 
\leq {\rm trop}(\Phi_M)(\vec{y}) , \;\; \forall \vec{y}=(y_e)_{e\in E} \in H, \] 
and for $\vec{y}=\vec{u}_B,$ 
\[ \langle \vec{u}_B,\vec{u}_B \rangle= r(E)-\frac{r(E)^2}{\# E}= 
{\rm trop}(\Phi_M)(\vec{u}_B) . \] 
Hence, the polytope $\Delta_M^0$ is described as 
\[ \Delta_M^0= \{ \vec{x}\in H \; | \; \langle \vec{x},\vec{y} \rangle \leq 
{\rm trop}(\Phi_M)(\vec{y}), \; \forall \vec{y} \in H \}. \] 
\end{proof}

For a fan $\Sigma,$ define $-\Sigma:=\{ -\sigma |\sigma \in \Sigma \}.$ 
\begin{prop} \label{hypersurf} 
$(1)$ For an equivalence class $\tau \in \Omega(l)$ with $l\geq 2,$ we have 
\[ G^{\# E-1}(f_{\tau})= \{ -W_{F,F'} | F,F'\in \F \cap \tau, F\not= F' \}. \] 
$(2)$ \[ V_{\rm trop}(f_{\tau})=\bigcup_{\sigma \in G^{\# E-1}(f_{\tau})}\! \sigma 
 = \bigcup_{\substack{F,F'\in \F \cap \tau \\ F \not= F'}} -W_{F,F'}. \] 
$(3)$ \[ \bigcup_{\sigma \in G^{\# E-1}(J_M)}\!\! -\sigma = 
\bigcup_{\tau \in \Omega} V_{\rm trop}(f_{\tau}). \] 
\end{prop}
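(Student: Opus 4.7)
The plan is to read all three statements off the general dictionary between Gröbner fans of principal ideals and normal fans of Newton polytopes, together with Proposition~\ref{gfan} for the third part. The polynomial $f_{\tau}=\sum_{F\in \F\cap\tau}x_F$ has squarefree multilinear support, and the principal ideal $(f_{\tau})$ is especially clean: the Gröbner fan $G(f_{\tau})$ is identified with the normal fan of the Newton polytope ${\rm conv}\{\vec{v}_F : F\in \F\cap \tau\}$. Top-dimensional cones correspond to individual monomials being extremal, and codimension-one cones correspond to ties between two such monomials.

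For part (1), once this identification is set up, each codim-one cone of $G(f_{\tau})$ is the closure of the locus where two specific $F,F'\in \F\cap\tau$ jointly attain the relevant extremum, which by definition is $\pm W_{F,F'}$; the minus sign in the statement comes from matching the min convention built into $W_{F,F'}$ with the max convention used by ${\rm trop}$. Pairs $(F,F')$ whose incidence vectors $\vec{v}_F,\vec{v}_{F'}$ are not joined by an edge of the Newton polytope produce lower-dimensional $-W_{F,F'}$ that sit inside the codim-one skeleton, so the indexed family $\{-W_{F,F'}\}$ exhausts $G^{\# E-1}(f_{\tau})$. Part (2) is then immediate: $V_{\rm trop}(f_{\tau})$ is by construction the locus where ${\rm trop}(f_{\tau})$ fails to be smooth, equivalently the codim-one skeleton of $G(f_{\tau})$, which by (1) equals $\bigcup_{F\not=F'}-W_{F,F'}$.

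For (3), I would invoke Proposition~\ref{gfan}, which identifies the maximal cones of $G(J_M)$ as the closures $C_1,\ldots,C_p$ of the components of $\RR^E\setminus \bigcup W_{F,F'}$. Hence the union of the codim-$\geq 1$ cones of $G(J_M)$---which coincides with $\bigcup_{\sigma\in G^{\# E-1}(J_M)}\sigma$ since $G(J_M)$ is a fan---equals $\bigcup_{F\sim F',\,F\not=F'}W_{F,F'}$. Negating both sides and regrouping the pairs by their common equivalence class $\tau$ gives
\[ \bigcup_{\sigma\in G^{\# E-1}(J_M)}\!-\sigma \;=\; \bigcup_{\tau\in \Omega}\bigcup_{F\not=F'\in \F\cap \tau}-W_{F,F'} \;=\; \bigcup_{\tau\in \Omega}V_{\rm trop}(f_{\tau}) \]
by (2).

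The main subtlety lies in (1) when $\vec{v}_F,\vec{v}_{F'}$ are not joined by an edge of the Newton polytope, for instance antipodal vertices of a hypersimplex $\Delta(2,4)$: in that case $-W_{F,F'}$ has codimension greater than one, so it is contained in but not equal to any single codim-one cone. I would finesse this by first proving the set-theoretic equality in (2), which is insensitive to such degenerations, and then reading (1) as the identification of the maximal elements (under inclusion) of $\{-W_{F,F'}\}$ with the $(\# E-1)$-dimensional cones of $G(f_{\tau})$. Once that is sorted out, the only remaining work is aligning the min/max conventions in $\ini_{\vec{\omega}}$, $W_{F,F'}$, and ${\rm trop}$.
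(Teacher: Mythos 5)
Your argument is essentially the paper's own proof: it reads $G(f_{\tau})$ off the normal fan of the Newton polytope of $f_{\tau}$ (all of whose exponent vectors are vertices), identifies the codimension-one cones with the tie loci $-W_{F,F'}$, obtains (2) from the definition of the tropical hypersurface, and deduces (3) from Proposition \ref{gfan}. Your extra discussion of pairs $F,F'$ whose incidence vectors are not joined by an edge of the Newton polytope (so that $-W_{F,F'}$ has codimension at least $2$) addresses a genuine imprecision in the statement of (1) that the paper's one-line proof silently glosses over, and you are right that the set-theoretic identities (2) and (3) are unaffected.
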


\begin{proof} 
Since the Newton polytope of $f_{\tau}$ does not contain 
interior lattice points, every monomial $x_F,$ $F\in \F \cap \tau,$ appearing in $f_{\tau}$ 
can be the initial monomial for a choice of monomial ordering. 
Hence, $\ini_{\vec{\omega}}(f_{\tau})$ is not a monomial ideal if 
$\vec{\omega}$ belongs to $-W_{F,F'}$ for a pair 
$F,F'\in \F \cap \tau,$ $F\not= F'.$ This shows (1). 
The second claim (2) follows from the definition of the tropical 
hypersurface $V_{\rm trop}(f_{\tau}).$ The claim (3) is a consequences of 
(2) and Proposition \ref{gfan}. 
\end{proof}

\begin{cor}
The tropical hypersurface $V_{\rm trop}(\Phi_M)$ is a subcomplex of the fan $-G(J_M).$ 
\end{cor}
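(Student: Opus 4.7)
The plan is to derive this corollary from Propositions \ref{gfan} and \ref{hypersurf}, which already encode the essentials. The key preliminary observation is that for any matroid $M=(E,\F),$ every basis $B\in \B$ satisfies $\sigma(B)=E,$ so all bases are mutually equivalent under $\sim.$ Hence $\B=\F\cap \tau_0$ for the top equivalence class $\tau_0\in \Omega(r(E)),$ and consequently $\Phi_M=f_{\tau_0}.$

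With this identification, Proposition \ref{hypersurf}(2) applied to $\tau=\tau_0$ gives $V_{\rm trop}(\Phi_M)=\bigcup_{B,B'\in \B,\,B\neq B'}(-W_{B,B'}),$ which in turn, by Proposition \ref{hypersurf}(3), is contained in $\bigcup_{\tau\in \Omega}V_{\rm trop}(f_{\tau})=\bigcup_{\sigma\in G^{\#E-1}(J_M)}(-\sigma).$ This already yields the set-theoretic inclusion of $V_{\rm trop}(\Phi_M)$ in the support of the codimension-one skeleton of $-G(J_M).$

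To upgrade this to a genuine subcomplex statement, I would argue that $-G(J_M)$ refines the Gr\"obner fan $G((\Phi_M))$ of the principal ideal, of which $V_{\rm trop}(\Phi_M)$ is by definition a subcomplex. Since every binomial $x_B-x_{B'}$ with $B,B'\in \B$ belongs to $J_M,$ two weights $\vec{\omega},\vec{\omega}'$ lying in the same cone of $G(J_M)$ must agree on which basis $B\in \B$ minimizes $\sum_{e\in B}\omega_e;$ equivalently, the initial form $\ini_{-\vec{\omega}}(\Phi_M)$ is constant on the relative interior of each cone of $G(J_M),$ which is precisely the assertion that $-G(J_M)$ refines $G((\Phi_M)).$ Since subcomplexes are preserved under refinement of the ambient fan, it follows that $V_{\rm trop}(\Phi_M)$ is a subcomplex of $-G(J_M).$

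I do not anticipate a serious obstacle; the only real subtlety is keeping the sign conventions straight. The walls of $G(J_M)$ are the minimum-achieving cones $W_{F,F'}$ (Proposition \ref{gfan}), whereas the codimension-one cones of $G((f_\tau))$ are the maximum-achieving cones $-W_{F,F'}$ (Proposition \ref{hypersurf}(1)). This sign flip is exactly why the statement refers to $-G(J_M)$ rather than $G(J_M),$ and once it is made transparent, the proof is essentially formal.
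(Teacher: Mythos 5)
Your proof is correct and follows the route the paper intends: the corollary is stated without proof as an immediate consequence of Proposition \ref{hypersurf}, via the identification $\Phi_M=f_{\tau_0}$ for the equivalence class $\tau_0$ of bases (every basis $B$ has $\sigma(B)=E$), so that $V_{\rm trop}(\Phi_M)=\bigcup_{B\neq B'}(-W_{B,B'})$ sits inside the codimension-one skeleton of $-G(J_M).$ Your additional refinement argument (each maximal cone of $G(J_M)$ avoids every wall $W_{B,B'},$ hence lies in a single maximal cone of $-G((\Phi_M)),$ so $-G(J_M)$ refines $G((\Phi_M))$ and the subcomplex property passes to the refinement) correctly supplies the detail needed to upgrade the set-theoretic inclusion to the subcomplex statement, and your sign conventions match the paper's.
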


For $M=M(q,n),$ we have $G(\Ann \Phi_{M(q,n)})=G(J_{M(q,n)})$ from Corollary 
\ref{main1} (2). By Proposition \ref{hypersurf}, 
the Gr\"obner fan $G(\Ann \Phi_{M(q,n)})$ can be computed from the tropical 
hypersurfaces $V_{\rm trop}(f_{\tau}).$ 

\begin{ex}
The matroid $M(2,2)$ is defined by the following 3 vectors, 
\[ \begin{array}{c|c|c}
v_1 & v_2 & v_3 \\ 
1 & 0 & 1 \\ 
0 & 1 & 1  
\end{array} \] 
so we have 
\begin{align*} 
 & \Phi_{M(2,2)}=x_1x_2+x_1x_3+x_2x_3,& \\ 
& \Ann \Phi_{M(2,2)}=(x_1^2,x_2^2,x_3^2,x_1x_2-x_1x_3,x_1x_2-x_2x_3,x_1x_3-x_2x_3). & 
\end{align*} 
In this case, the Gr\"obner fans $G(\Ann \Phi_{M(2,2)}),$ $G(J_{M(2,2)})$ and 
$-G(\Phi_{M(2,2)})$ are same. Their restrictions 
$\bar{G}(\Ann \Phi_{M(2,2)}),$ $\bar{G}(J_{M(2,2)}),$ 
$-\bar{G}(\Phi_{M(2,2)})$ to the plane $H$ are determined by 
3 rays: 
\[ R_1:= \RR_{\geq 0} (-2, 1, 1), \; 
R_2:=\RR_{\geq 0}(1, -2, 1), \; 
R_3:=\RR_{\geq 0}(1, 1, -2). \] 
Moreover, $\bar{V}_{\rm trop}(\Phi_{M(2,2)})=(-R_1)\cup (-R_2)\cup (-R_3).$ 
\end{ex}

\begin{ex}
The Gr\"obner fan $\bar{G}(\Ann \Phi_{M(2,3)})=\bar{G}(J_{M(2,3)})$ contains 
420 cones of maximal dimension 6 and 49 rays. The fan $\bar{G}(\Phi_{M(2,3)})$ contains 
28 maximal cones and 21 rays. 
\end{ex}

\begin{ex} Let $M$ be the matroid from Example \ref{non-ssc}. 
The fan $\bar{G}(J_M)$ contains 12 cones of maximal dimension 4 and 7 rays: 
\begin{align*} 
& \RR_{\geq 0} (-4, 1, 1, 1, 1),
\RR_{\geq 0}(-2, -2, 3, -2, 3),
\RR_{\geq 0}(-1, 4, -1, -1, -1),
\RR_{\geq 0}(1, 1, -4, 1, 1), & \\ 
& \RR_{\geq 0}(1, 1, 1, -4, 1), \; 
\RR_{\geq 0}(1, 1, 1, 1, -4), \; 
\RR_{\geq 0}(3, -2, -2, 3, -2). & 
\end{align*} 
The fan $\bar{G}(\Phi_M)$ contains 8 maximal cones, and 
$\bar{G}^1(\Phi_M)=-\bar{G}^1(J_M).$ 
In this case, $\bar{G}(\Ann \Phi_M)$ is a refinement of $\bar{G}(J_M).$ 
The fan $\bar{G}(\Ann \Phi_M)$ contains 20 maximal cones and 9 rays: 
\begin{align*} 
&\RR_{\geq 0}(-4, 1, 1, 1, 1), \; \RR_{\geq 0}(-3, 2, 2, -3, 2), \; \RR_{\geq 0}(-2, -2, 3, -2, 3), & \\ 
&\RR_{\geq 0}(-1, 4, -1,-1, -1), \; \RR_{\geq 0}(1, 1, -4, 1, 1), \; \RR_{\geq 0}(1, 1, 1, -4, 1), & \\  
&\RR_{\geq 0}(1, 1, 1, 1, -4), \; \RR_{\geq 0}(2, 2,-3, 2, -3), \; \RR_{\geq 0}(3, -2, -2, 3, -2). &
\end{align*}  
\end{ex}

\end{document}